\documentclass[11pt]{amsart}
\usepackage{amscd}
\usepackage{verbatim}
\usepackage{amssymb}
\usepackage{amsmath}
\usepackage{amsthm}
\usepackage{amsfonts}
\usepackage{mathrsfs}
\usepackage{amsthm}
\usepackage{euscript}
\usepackage{xcolor}

\usepackage[margin=30 mm,heightrounded=true,centering]{geometry}
\setlength{\marginparwidth}{2.1cm}




\newtheorem{theorem}{Theorem}[section]

\newtheorem{lemma}[theorem]{Lemma}

\newtheorem{corollary}[theorem]{Corollary}
\newtheorem{definition}[theorem]{Definition}
\newtheorem{example}[theorem]{Example}

\DeclareMathOperator{\riem}{Riem}
\DeclareMathOperator{\ric}{Ric}
\DeclareMathOperator{\hess}{Hess}
\DeclareMathOperator{\tcd}{TCD}
\DeclareMathOperator{\id}{id}
\DeclareMathOperator{\tr}{tr}
\DeclareMathOperator{\arctanh}{arctanh}

\newcounter{mnotecount}[section]

\setcounter{equation}{0}

\begin{document}

\title[$N$-Bakry-\'Emery spacetimes]{Cosmological singularity theorems and splitting theorems for $N$-Bakry-\'Emery spacetimes}

\author{Eric Woolgar}
\address{Department of Mathematical and Statistical Sciences,
University of Alberta, Edmonton, Alberta, Canada T6G 2G1}
\email{ewoolgar(at)ualberta.ca}

\author{William Wylie}
\address{215 Carnegie Building, Department of Mathematics, Syracuse University, Syracuse, NY 13244, USA}
\email{wwylie@syr.edu}

\date{\today}

\begin{abstract}
\noindent We study Lorentzian manifolds with a weight function such that the $N$-Bakry-\'Emery tensor is bounded below. Such spacetimes arise in the physics of scalar-tensor gravitation theories, including Brans-Dicke theory, theories with Kaluza-Klein dimensional reduction, and low-energy approximations to string theory. In the ``pure Bakry-\'Emery'' $N= \infty$ case with $f$ uniformly bounded above and initial data suitably bounded, cosmological-type singularity theorems are known, as are splitting theorems which determine the geometry of timelike geodesically complete spacetimes for which the bound on the initial data is borderline violated. We extend these results in a number of ways. We are able to extend the singularity theorems to finite $N$-values $N\in (n,\infty)$ and $N\in (-\infty,1]$. In the $N\in (n,\infty)$ case, no bound on $f$ is required, while for $N\in (-\infty,1]$ and $N= \infty$, we are able to replace the boundedness of $f$ by a weaker condition on the integral of $f$ along future-inextendible timelike geodesics. The splitting theorems extend similarly, but when $N=1$ the splitting is only that of a warped product for all cases considered. A similar limited loss of rigidity has been observed in prior work on the $N$-Bakry-\'Emery curvature in Riemannian signature when $N=1$, and appears to be a general feature.

\end{abstract}

\maketitle

\section{Introduction}
\setcounter{equation}{0}

\noindent Riemannian and Lorentzian $n$-manifolds with a preferred twice-differentiable function $f:M\to {\mathbb R}$ (sometimes defined in terms of a density) admit a family of generalizations of the Ricci tensor $\ric$, known as the $N$-Bakry-\'Emery-Ricci tensor, or simply the $N$-Bakry-\'Emery tensor, given by
\begin{equation}
\label{eq1.1}
\ric^N_f:=\ric+\hess f-\frac{df\otimes df}{N-n}\ .
\end{equation}
Here $\hess$ denotes the Hessian defined by the Levi-Civita connection $\nabla$ of the metric $g$ by $\hess u :=\nabla^2 u$. The so-called \emph{synthetic dimension} $N\in {\mathbb R}$, $N\neq n$, is the family parameter for the family of tensors (some authors use $m:=N-n$ as the parameter). There is also a tensor called simply the Bakry-\'Emery-Ricci (or more simply Bakry-\'Emery) tensor, given by
\begin{equation}
\label{eq1.2}
\ric_f:=\ric+\hess f\ .
\end{equation}
This tensor is sometimes thought of formally as the $N=\infty$ case of the $N$-Bakry-\'Emery-Ricci tensor. It can equally well be thought of as the $N= -\infty$ case; the sign has no significance.

For Lorentzian manifolds, which will be the focus of this paper, we give the following definition which is analogous to what is done in the Riemannian context.
\begin{definition}\label{definition1.1}
If $\ric^N_f(X,X)\ge \lambda$ for all unit timelike vectors $X$ (i.e., $g(X,X)=-1$) and given functions $f$ and $\lambda$, we say that the \emph{$\tcd(\lambda,N)$ condition} holds for $(M,g,f)$. We call this a \emph{timelike curvature-dimension condition}. We use $\tcd(\lambda)$ to denote the condition $\ric_f(X,X) \ge \lambda$.
\end{definition}

The $\tcd(\lambda,N)$ condition reduces to the \emph{timelike convergence condition} in general relativity when the function $f$ is constant. In general relativity, the timelike convergence condition is equivalent to the \emph{strong energy condition} for matter when the Einstein equations hold with matter source terms (but without a cosmological term, meaning here that $\lambda=0$). Energy conditions are required in the proofs of the canonical theorems of mathematical relativity, including the singularity theorems (see, e.g., \cite{HE}). One therefore expects to be able to prove similar singularity theorems assuming instead some form of $\tcd(\lambda,N)$ condition. This was done for the $\tcd(0,N)$ condition in \cite{Case} when $N>n$ and when $N=\infty$. Results under the $\tcd(\lambda)$ assumption for $\lambda\ge 0$ and $N=\infty$ appear in \cite{RW, GW}.

In this paper, we extend the results of \cite{GW} to finite values so as to cover all $N\in (-\infty,1]\cup(n,\infty)\cup\{\infty\}$. 
When $N=\infty$, the theorems of \cite{Case} and \cite{GW} require that $f$ be bounded above. We weaken this to an integral condition on $f$, and require this condition when considering $N\in (-\infty,1]$ as well. However, it is not needed for our extension to $N>n$ of the results in \cite{GW}, just as it was not needed in \cite{Case} for finite values $N>n$. The integral condition on $f$ is expressed in terms of the following definition.

\begin{definition}\label{definition1.2}
We say a future-inextendible timelike geodesic $\gamma:[0,T)\to M$, $T\in (0,\infty]$, is \emph{future $f$-complete} if it is complete with respect to the parameter $s(t):=\int\limits_0^t e^{-\frac{2f(\tau)}{(n-1)}}d\tau$, where we abbreviate $f(\tau):=f\circ\gamma(\tau)$.
\end{definition}

Future $f$-completeness along $\gamma$ is equivalent to the surjectivity of $s:[0,T)\to [0,\infty)$. This definition is motivated by \cite[Definition 6.2]{Wylie} in the Riemannian case. We take note of the following very simple result, which implies that future $f$-completeness is a generalization of the commonly-used condition that $f$ be uniformly bounded above.

\begin{lemma}\label{lemma1.3}
Let $\Sigma$ be a Cauchy surface. Say that $f$ is uniformly bounded above in the future of $\Sigma$. Then each future-complete timelike geodesic is future $f$-complete. In particular, if $\Sigma$ is compact and $\nabla f$ is future-causal in the future of $\Sigma$, then each future-complete timelike geodesic is future $f$-complete.
\end{lemma}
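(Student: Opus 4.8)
The plan is to reduce the statement to an elementary lower bound on the integrand $e^{-2f/(n-1)}$ along $\gamma$. A future-complete timelike geodesic is, in its affine parametrization, defined on all of $[0,\infty)$ and is future-inextendible; extending it maximally to the past yields an inextendible causal curve, which by the defining property of a Cauchy surface meets $\Sigma$ in exactly one point. Consequently there is a parameter $t_0\ge 0$ (one may take $t_0=0$ when $\gamma|_{[0,\infty)}$ does not itself meet $\Sigma$, which happens only if $\gamma(0)$ already lies to the future of $\Sigma$) such that $\gamma(t)$ lies to the future of $\Sigma$ for every $t\ge t_0$: having crossed $\Sigma$, the future-directed causal curve $\gamma$ cannot cross it again.

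Assume first that $f\le C$ to the future of $\Sigma$. Then $e^{-2f(\tau)/(n-1)}\ge e^{-2C/(n-1)}=:\delta>0$ for all $\tau\ge t_0$. The function $s$ is continuous, strictly increasing (its integrand is positive), and satisfies $s(0)=0$ and
\[
s(t)\ \ge\ \int_{t_0}^{t}e^{-2f(\tau)/(n-1)}\,d\tau\ \ge\ \delta\,(t-t_0)\ \longrightarrow\ \infty\qquad(t\to\infty),
\]
so $s\colon[0,\infty)\to[0,\infty)$ is surjective; that is, $\gamma$ is future $f$-complete. This proves the first claim.

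For the ``in particular'' clause it is enough, by the first claim, to show that compactness of $\Sigma$ together with future-causality of $\nabla f$ to the future of $\Sigma$ forces $f$ to be bounded above there. Given a point $p$ to the future of $\Sigma$, choose a past-directed causal curve $\sigma\colon[0,1]\to M$ with $\sigma(0)=p$ and $\sigma(1)\in\Sigma$, which exists since $\Sigma$ is a Cauchy surface and which runs through the future of $\Sigma$, so that the hypothesis on $\nabla f$ applies along it. Then $f(p)=f(\sigma(1))-\int_0^1 g(\nabla f,\sigma')\,dr$, and along $\sigma$ the tangent $-\sigma'$ is future-directed causal while $\nabla f$ is future-directed causal (or zero); with the sign convention $g(X,X)=-1$ for unit timelike $X$, two future-directed causal vectors pair nonpositively, so $g(\nabla f,\sigma')=-g(\nabla f,-\sigma')\ge 0$ and hence $f(p)\le f(\sigma(1))\le\max_\Sigma f<\infty$. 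Thus $f$ is bounded above to the future of $\Sigma$ and the first claim applies. The only points needing care are this Lorentzian sign computation and the observation that only the tail $\int_{t_0}^{\infty}$ of the integral defining $s$ is relevant; the rest is routine.
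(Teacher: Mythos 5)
Your proof is correct and follows essentially the same route as the paper: a positive lower bound on $e^{-2f/(n-1)}$ beyond the point where the geodesic crosses $\Sigma$ gives surjectivity of $s$, and future-causality of $\nabla f$ plus compactness of $\Sigma$ gives $f\le\max_\Sigma f$ to the future. The only (harmless) difference is that you bound $f$ on all of $J^+(\Sigma)$ by integrating along an arbitrary causal curve back to $\Sigma$, whereas the paper simply notes $f$ is non-increasing along the geodesic itself, which already suffices.
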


(By \emph{future-causal}, we mean that $g(\nabla f, \nabla f)\le 0$ and $\nabla f$ is future-pointing wherever it is non-vanishing, but we permit it to vanish.)

\begin{proof}
Every future-inextendible timelike geodesic must meet $\Sigma$ exactly once. Beyond the point at which the geodesic intersects $\Sigma$, we have that $f\le k$, so $s(t)\ge e^{-\frac{2k}{(n-1)}}t \to\infty$ as $t\to\infty$, proving the first statement. To prove the second, since $\nabla f$ is future causal beyond $\Sigma$, $f$ must be (weakly) decreasing along the geodesic beyond $\Sigma$, so $f\le \max_{\Sigma} f=:k$ and hence $f$ is uniformly bounded to the future of $\Sigma$.
\end{proof}

With this in hand, we can now state our results. Our first theorem allows us to extend to finite $N$-values and to spacetimes whose future-complete timelike geodesics are future $f$-complete a singularity theorem that was proved in \cite{GW} for $N=\infty$ and $f$ bounded above.

\begin{theorem}\label{theorem1.4}
Let $M$ be a spacetime satisfying $\tcd(0,N)$ for some fixed $N\in (-\infty,1]\cup(n,\infty)\cup \{ \infty \}$. Let $S$ be a smooth compact spacelike Cauchy surface for $M$ with strictly negative $f$-mean curvature
\begin{equation}
\label{eq1.3}
H_f(S) :=H-\nabla_{\nu}f < 0\ ,
\end{equation}
where $\nu$ is the future unit normal field to $S$ and $H$ is the mean curvature with respect to $\nu$. If $N\in (-\infty,1]\cup \{ \infty \}$ suppose further that each future-complete timelike geodesic orthogonal to $S$ is future $f$-complete. Then every timelike geodesic is future incomplete.
\end{theorem}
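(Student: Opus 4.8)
The plan is to run Hawking's cosmological argument with the Raychaudhuri equation replaced by a weighted version adapted to the synthetic dimension $N$, reparametrized, in the cases $N\in(-\infty,1]\cup\{\infty\}$, so that future $f$-completeness becomes exactly the completeness hypothesis needed to close it. First I would note that, $S$ being a compact spacelike Cauchy surface, $M$ is globally hyperbolic and $H_f(S)\le -c$ for some constant $c>0$. Along a future-directed unit-speed timelike geodesic $\gamma$ meeting $S$ orthogonally at $t=0$, let $\theta$ be the expansion of the congruence of such geodesics and set $\theta_f:=\theta-f'$ with $f':=\tfrac{d}{dt}(f\circ\gamma)$, so that $\theta_f(0)=H_f(S)|_{\gamma(0)}\le -c$. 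Differentiating $\theta'=-\ric(\gamma',\gamma')-|\sigma|^2-\tfrac{\theta^2}{n-1}$, using $f''=\hess f(\gamma',\gamma')$ along $\gamma$, the definition of $\ric^N_f$, and $\tcd(0,N)$ together with $|\sigma|^2\ge 0$, one gets
\[
\theta_f' \;=\; -\ric^N_f(\gamma',\gamma')-\frac{(f')^2}{N-n}-|\sigma|^2-\frac{\theta^2}{n-1}\ \le\ -\frac{(f')^2}{N-n}-\frac{\theta^2}{n-1}\,,
\]
with the middle term understood to be absent when $N=\infty$.

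Next I would split into two regimes. For $N\in(n,\infty)$ the algebraic identity
\[
\frac{a^2}{N-n}+\frac{(a+b)^2}{n-1}-\frac{b^2}{N-1}=\frac{1}{n-1}\Bigl(\sqrt{\tfrac{N-1}{N-n}}\,a+\sqrt{\tfrac{N-n}{N-1}}\,b\Bigr)^2\ge 0,
\]
valid precisely because $n-1,\ N-n,\ N-1>0$, applied with $a=f'$, $b=\theta_f$, $a+b=\theta$, upgrades the estimate to $\theta_f'\le -\tfrac{\theta_f^2}{N-1}$; a Riccati comparison with $\theta_f(0)\le -c$ then forces $\theta_f\to-\infty$, hence a focal point of $S$ along $\gamma$, at some parameter $\le \tfrac{N-1}{c}$, uniformly in $\gamma$, as long as $\gamma$ extends that far. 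For $N\in(-\infty,1]\cup\{\infty\}$ the coefficient $\tfrac1{N-n}$ has the wrong sign (or is absent), so instead I would set $u:=e^{2f/(n-1)}\theta_f$ and reparametrize by the arclength $s$ of Definition~\ref{definition1.2}, $\tfrac{ds}{dt}=e^{-2f/(n-1)}$: the exponential factor is chosen so that its logarithmic derivative $\tfrac{2f'}{n-1}$ cancels the cross term produced by $\theta=\theta_f+f'$, and the surviving term then carries the coefficient $\tfrac1{N-n}+\tfrac1{n-1}=\tfrac{N-1}{(N-n)(n-1)}\ge 0$ for $N\le 1$, so the outcome is $\tfrac{du}{ds}\le -\tfrac{u^2}{n-1}$ with $u(0)=e^{2f(\gamma(0))/(n-1)}\,H_f(S)|_{\gamma(0)}<0$. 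Hence $u\to-\infty$ at some $s_*\le (n-1)/|u(0)|$, and if $\gamma$ is future $f$-complete --- equivalently $s$ is surjective onto $[0,\infty)$ --- then $s_*$, and so a focal point of $S$ along $\gamma$, is attained at a finite proper time.

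Finally I would assemble the conclusion. When $N\in(n,\infty)$: a maximal timelike geodesic from $S$ to $p$ is $S$-orthogonal, is defined on $[0,d(S,p)]$, and has no focal point of $S$ in its interior, so the uniform focal-point bound gives $d(S,\cdot)\le \tfrac{N-1}{c}$ on $J^+(S)$; as each future-inextendible timelike geodesic meets the Cauchy surface $S$ exactly once, each has finite future length, i.e.\ is future incomplete. When $N\in(-\infty,1]\cup\{\infty\}$: I would argue by contradiction. If some timelike geodesic were future complete, then, after translating its parameter so that it crosses $S$ at $t=0$, one obtains points $p_k\in I^+(S)$ with $d(S,p_k)\to\infty$, and the maximal (hence $S$-orthogonal) geodesic segments from $S$ to $p_k$, with initial points in the compact surface $S$, converge along a subsequence --- by continuous dependence on initial data and the standard limiting properties of globally hyperbolic spacetimes with compact Cauchy surface --- to a future-complete $S$-orthogonal timelike geodesic $\gamma_\infty$ that is maximal, each $\gamma_\infty|_{[0,t]}$ realizing $d(S,\gamma_\infty(t))$, hence with no focal point of $S$ on $(0,\infty)$. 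But by hypothesis $\gamma_\infty$ is future $f$-complete, so the estimate above forces a focal point of $S$ along it --- a contradiction. The main obstacle is getting the weighted Riccati estimate right and, in particular, recognizing that $e^{2f/(n-1)}$ is the unique conformal factor whose arclength is the parameter $s$ of Definition~\ref{definition1.2}, so that future $f$-completeness is exactly the hypothesis that makes focusing occur; a subsidiary point is verifying that $N\in(1,n)$ is genuinely inaccessible --- the quadratic form above ceases to be semidefinite there --- which is why that range is excluded, whereas the limiting construction of $\gamma_\infty$ is the standard $S$-ray argument and is the only place where compactness of $S$ is truly used.
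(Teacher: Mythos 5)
Your analytic core is exactly the paper's. For $N>n$ your completed-square identity is inequality (\ref{eq2.14}), and it yields $\theta_f'\le-\theta_f^2/(N-1)$ and a uniform focusing time just as in Lemma \ref{lemma2.4}; for $N\in(-\infty,1]\cup\{\infty\}$ your substitution $u=e^{2f/(n-1)}\theta_f$ together with reparametrization by $s$ is equivalent to integrating (\ref{eq2.11}) in Lemma \ref{lemma2.2} (and is literally the device the paper uses in Lemma \ref{lemma2.5}); your blow-up bound $s_*\le (n-1)/|u(0)|$ agrees with (\ref{eq2.9}). Your assembly for $N>n$ --- a uniform bound on $d(S,\cdot)$ over $J^+(S)$ via maximal $S$-orthogonal geodesics, hence finite future length for every inextendible timelike geodesic crossing the Cauchy surface --- is a clean, direct version of the standard argument and is fine.

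The gap is in your assembly for $N\in(-\infty,1]\cup\{\infty\}$: you assert that the maximal segments from $S$ to $p_k$ converge to a \emph{future-complete} $S$-orthogonal geodesic $\gamma_\infty$. That does not follow from the limit construction. Continuous dependence and limit-curve arguments give convergence on compact parameter intervals and produce a future-\emph{inextendible} $S$-ray with no focal points, but inextendible is not complete: the maximal domain of a geodesic is only lower semicontinuous in its initial data, so the limit of maximal segments of length $L_k\to\infty$ may itself be future-incomplete (indeed, the theorem asserts it must be). Completeness of $\gamma_\infty$ is precisely what you need in order to invoke the hypothesis that future-complete orthogonal geodesics are future $f$-complete, so your contradiction does not close as written; nor can you fall back on a uniform length bound as in the $N>n$ case, because in this regime the focusing time depends on $f$ along the geodesic through $s_p$ and is not uniform over $S$. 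This is exactly the delicate point that the paper routes through Lemma \ref{lemma2.1} (quoted from \cite{GW}), which supplies arbitrarily long non-focusing timelike geodesics orthogonal to $S$, and the paper draws its contradiction there (tersely --- the tension between ``arbitrarily long'' and ``future-complete'' is the substantive issue under the weakened hypothesis on $f$). To repair your version you must either prove a statement of that type or otherwise justify producing a genuinely complete, non-focusing, $S$-orthogonal geodesic from the assumed complete one, rather than asserting completeness of the limit.
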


This theorem is of cosmological type in that \emph{every} timelike geodesic is future incomplete, suggesting a ``Big Crunch''. In cosmology, such theorems are often phrased in time-reversed form so as to suggest the existence of a so-called ``Big Bang''.

The necessity of a condition controlling $f$ when $N\in (-\infty,1]\cup \{ \infty \}$ is clear from the following.
\begin{example}\label{example1.5}
The Einstein static universe $-dt^2+g(S^{n-1},{\rm can})$ in $n>2$ dimensions with $f=e^t$ has $\ric_f(X,X)\ge e^t+\frac{e^{2t}}{(n-N)}>0$ so $\tcd(0,N)$ holds, while $H_f= -e^t < 0$ for any constant-$t$ hypersurface. But this spacetime is geodesically complete.
\end{example}
This example does not violate Theorem \ref{theorem1.4} because $\int\limits_0^{\infty} e^{-\frac{2f(t)}{(n-1)}}dt<\infty$ when $f(t)=e^t$. The spacetime admits future-complete timelike geodesics that are not future $f$-complete and $f$ is not bounded above.

Next we similarly extend a theorem of \cite{GW} applicable to spacetimes with positive cosmological constant.

\begin{theorem}\label{theorem1.6}
Let $M$ be a spacetime having smooth compact Cauchy surface $S$. Suppose that
\begin{enumerate}
\item[(a)] $N>n$ and
\begin{enumerate}
\item[(i)] $\tcd\left ( -(n-1),N\right )$ holds and
\item[(ii)] $H_f< -(n-1)$ on $S$, or
\end{enumerate}
\item[(b)] $N\in (-\infty,1]\cup \{ \infty \}$ and
\begin{enumerate}
\item[(i)] $\tcd\left ( -(n-1)e^{-\frac{4f}{(n-1)}},N\right )$ holds on $M$ to the future of $S$,
\item[(ii)] each future-complete timelike geodesic orthogonal to $S$ is future $f$-complete, and
\item[(iii)] $H_f< -(n-1)e^{-\frac{2B}{(n-1)}}$ on $S$, with $B:=\inf_S f$.
\end{enumerate}
\end{enumerate}
Then every timelike geodesic is future incomplete.
\end{theorem}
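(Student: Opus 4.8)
\noindent The strategy is the usual one for cosmological singularity theorems — force a focal point of $S$ along the geodesics issuing orthogonally from $S$ by a Raychaudhuri/Riccati estimate, then invoke global hyperbolicity (guaranteed by the compact Cauchy surface) to turn this into geodesic incompleteness — but with the Raychaudhuri step adapted to the weight and, in case (b), to the reparametrization of Definition~\ref{definition1.2}. I would follow the scheme of \cite{GW}. Fix a future-directed unit timelike geodesic $\gamma$ meeting $S$ orthogonally at $\gamma(0)$; let $\theta$ be the expansion of the congruence of such geodesics and put $\theta_f:=\theta-\frac{d}{dt}(f\circ\gamma)$, so that $\theta_f(0)=H_f(\gamma(0))$. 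Discarding the nonnegative shear term in the Raychaudhuri equation and using $\ric^N_f(\gamma',\gamma')=\ric_f(\gamma',\gamma')-\frac{(f')^2}{N-n}$ gives $\theta_f'\le -\ric^N_f(\gamma',\gamma')-\frac{(f')^2}{N-n}-\frac{(\theta_f+f')^2}{n-1}$.

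For case (a), with $N>n$, the sign of $N-n$ lets one complete the square, $\frac{(f')^2}{N-n}+\frac{(\theta_f+f')^2}{n-1}\ge\frac{\theta_f^2}{N-1}$, so that (a)(i) produces an $N$-Bakry-\'Emery Riccati inequality in proper time which, combined with the initial contraction (a)(ii), forces $\theta_f\to-\infty$ (hence a focal point of $S$) within a proper-time bound independent of $\gamma$. For case (b) the key observation is that the $e^{-4f/(n-1)}$ weight in (b)(i) is precisely what is needed to kill the weight after the substitution $u:=e^{2f/(n-1)}\theta_f$ and the change of parameter to $s$ as in Definition~\ref{definition1.2}: a short computation — in which, for both $N\le 1$ and $N=\infty$, the residual $(f')^2$-terms have a favourable sign and can be dropped — collapses the inequality to $\frac{du}{ds}\le (n-1)-\frac{u^2}{n-1}$, whose negative equilibrium is exactly $-(n-1)$.

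The initial data must then be made uniform. In case (b), $u(0)=e^{2f(\gamma(0))/(n-1)}H_f(\gamma(0))$, and since $f(\gamma(0))\ge B=\inf_S f$, assumption (b)(iii) forces $u(0)<-(n-1)$; compactness of $S$ improves this to a uniform gap $u(0)\le -c_0<-(n-1)$. The Riccati comparison then shows that along every future-directed timelike geodesic orthogonal to $S$ one has $u\to-\infty$ — a focal point of $S$ — before the parameter $s$ reaches some $S^\ast<\infty$ depending only on $c_0$; case (a) gives the analogous uniform statement in proper time.

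The endgame is the standard causality argument, which I expect to be the only genuinely delicate step. Suppose some future-directed timelike geodesic is future complete. Using global hyperbolicity, the Cauchy property of $S$, and the compactness of the unit normal bundle of $S$, one takes maximal geodesic segments from $S$ to points running off to future timelike infinity — these are orthogonal to $S$ and have no interior focal points — and extracts via the limit curve lemma a future-complete, maximal timelike geodesic ray $\sigma$ issuing orthogonally from $S$; it is the presence of the complete geodesic that makes the limit complete, and this is the point that must be argued carefully, just as in \cite{GW}. In case (b), (b)(ii) now forces $\sigma$ to be future $f$-complete, so its $s$-parameter attains every value in $[0,\infty)$, in particular exceeding $S^\ast$; but $\sigma$, being maximal, carries no focal point of $S$, a contradiction. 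In case (a) the same contradiction follows directly, since a future-complete $\sigma$ has unbounded proper time and no hypothesis on $f$ is needed. Hence no future-complete timelike geodesic exists, which is the assertion of the theorem.
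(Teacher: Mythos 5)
Your Raychaudhuri estimates are exactly the paper's focusing lemmas: in case (a) your completed square $\frac{(f')^2}{N-n}+\frac{(\theta_f+f')^2}{n-1}\ge\frac{\theta_f^2}{N-1}$ is identity (\ref{eq2.14}) and reproduces Lemma \ref{lemma2.7}, while in case (b) your substitution $u=e^{2f/(n-1)}\theta_f$ together with the reparametrization by $s$ is precisely Lemma \ref{lemma2.5} (with $y=u/(n-1)$ one gets $\dot y\le 1-y^2$), and your handling of the initial data via $B=\inf_S f$ and compactness of $S$ is also the paper's. Up to the endgame, then, your proposal is the paper's proof.

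The endgame is where you depart, and where there is a genuine gap. You claim that the limit-curve construction yields a \emph{future-complete} maximal timelike ray orthogonal to $S$, ``the presence of the complete geodesic making the limit complete.'' That inference is not valid: the limit of the maximal segments from $S$ to points escaping along the complete geodesic $\sigma$ is a future-inextendible $S$-ray with no focal points, but completeness is not a closed condition and does not pass to the limit geodesic (the limit direction may generate an incomplete geodesic even though all nearby segments are arbitrarily long). The paper's Lemma \ref{lemma2.1} (that is, \cite[Lemma 2.4]{GW}) is formulated precisely to avoid asserting this: it provides only arbitrarily long future timelike geodesics leaving $S$ orthogonally without focal points, and the contradiction is then drawn from the focusing bound. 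In your case (a) the unjustified completeness claim is harmless, since there the focusing time is uniform in proper time and arbitrarily long nonfocusing orthogonal geodesics already give the contradiction. But in case (b) it is load-bearing: hypothesis (b)(ii) constrains only \emph{future-complete} geodesics orthogonal to $S$, and your focusing bound is uniform only in the parameter $s$ of Definition \ref{definition1.2}, so you genuinely need the limiting ray to be future-complete (hence future $f$-complete) before you can conclude that $s$ exceeds $S^{\ast}$ along it. As written, the step you yourself flagged as delicate is asserted rather than proved; you should either argue via Lemma \ref{lemma2.1} and the focusing bound as the paper does, or supply an actual argument for the completeness (or $f$-completeness) of the nonfocusing geodesics you produce.
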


We have already noted that the future $f$-complete condition controlling $f$ when $N\le 1$ or $N=\infty$ is implied if there is an upper bound on $f$. Combining this with Theorem \ref{theorem1.6}, it is easy to obtain the following singularity theorem.

\begin{theorem}\label{theorem1.7}
Let $M$ be a spacetime having a smooth compact Cauchy surface $S$. Suppose that $N\in (-\infty,1]\cup \{ \infty \}$. If
\begin{enumerate}
\item[(i$'$)] $\tcd\left ( -(n-1),N\right )$ holds to the future of $S$,
\item[(ii$'$)] $f\le k$ to the future of $S$ for some $k\in {\mathbb R}$, and
\item[(iii$'$)] $H_f< -(n-1)e^{\frac{2(k-B)}{(n-1)}}$ on $S$, with $B:=\inf_S f$,
\end{enumerate}
then every timelike geodesic is future incomplete. Furthermore, conditions (\rm{ii}$'$) and (\rm{iii}$'$) can be replaced by
\begin{enumerate}
\item[(ii$''$)] $\nabla f$ is future causal to the future of $S$ and
\item[(iii$''$)] $H_f< -(n-1)$ on $S$.
\end{enumerate}
\end{theorem}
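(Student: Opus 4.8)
The plan is to obtain the first statement as a direct consequence of Theorem~\ref{theorem1.6}(b) after a harmless additive normalization of $f$, and to obtain the second by re-running the argument behind Theorem~\ref{theorem1.6}(b) one $S$-orthogonal geodesic at a time.

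For the first statement, observe that $\ric^N_f$ and $H_f$ are both unchanged when $f$ is replaced by $\tilde f:=f-k$, that $\int\limits_0^t e^{-2\tilde f(\tau)/(n-1)}\,d\tau=e^{2k/(n-1)}\int\limits_0^t e^{-2f(\tau)/(n-1)}\,d\tau$ so future $\tilde f$-completeness and future $f$-completeness coincide, and that future incompleteness of a timelike geodesic is a property of $(M,g)$ alone. By (ii$'$) we have $\tilde f\le 0$ to the future of $S$, so $e^{-4\tilde f/(n-1)}\ge 1$ and hence, by (i$'$),
\[
\ric^N_{\tilde f}(X,X)=\ric^N_f(X,X)\ge -(n-1)\ge -(n-1)e^{-\frac{4\tilde f}{n-1}}
\]
for every unit timelike $X$ to the future of $S$, which is hypothesis (b)(i). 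Lemma~\ref{lemma1.3}, applied with the upper bound $0$ for $\tilde f$ on the Cauchy surface $S$, gives (b)(ii). Finally, with $\tilde B:=\inf_S\tilde f=B-k$, hypothesis (b)(iii) becomes $H_{\tilde f}<-(n-1)e^{-2\tilde B/(n-1)}=-(n-1)e^{2(k-B)/(n-1)}$, which is exactly (iii$'$) since $H_{\tilde f}=H_f$. Theorem~\ref{theorem1.6}(b) applied to $(M,g,\tilde f)$ then yields the conclusion.

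For the second statement, the second part of Lemma~\ref{lemma1.3} (using compactness of $S$ together with (ii$''$)) shows that every future-complete $S$-orthogonal timelike geodesic is future $f$-complete and that $f\le\max_S f$ to the future of $S$, so (ii$'$) holds; it remains to weaken (iii$'$) to (iii$''$). For this I would look inside the proof of Theorem~\ref{theorem1.6}(b), which rules out a future-complete $S$-orthogonal timelike geodesic $\gamma$ by producing a focal point of $S$ along $\gamma$ through an $f$-modified Riccati comparison. In that comparison the constant $B=\inf_S f$ enters only through the value $f(\gamma(0))$ (the initial datum at $S$) and the bound $-(n-1)e^{-4f/(n-1)}$ only through the values of $f$ along $\gamma$. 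Since $\nabla f$ is future causal, $f\circ\gamma$ is non-increasing, so subtracting the constant $f(\gamma(0))$ from $f$, which alters neither $\ric^N_f$ nor $H_f$, we may assume $f\le 0$ along $\gamma$ and $f(\gamma(0))=0$; then $\ric^N_f(\dot\gamma,\dot\gamma)\ge -(n-1)\ge -(n-1)e^{-4f/(n-1)}$ along $\gamma$ by (i$'$), while the initial datum $H_f(\gamma(0))<-(n-1)$ supplied by (iii$''$) is precisely the borderline case of (b)(iii) with $B=0$. The comparison therefore runs exactly as in Theorem~\ref{theorem1.6}(b), $\gamma$ cannot be future-complete, and re-running the remainder of that proof, in particular its reduction of arbitrary future-directed timelike curves from the compact Cauchy surface $S$ to $S$-orthogonal geodesics, shows that every timelike geodesic is future incomplete.

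The hypothesis checks in the second paragraph are routine. The step I expect to require the most care is the localization used in the third: confirming that the proof of Theorem~\ref{theorem1.6}(b) depends on $f$ only through its value at $\gamma(0)$ and its values along $\gamma$, so that under (ii$''$) the global infimum $B$ may be traded for the pointwise value $f(\gamma(0))$. No new estimate beyond those already in Theorem~\ref{theorem1.6} should be needed, but one must check that nothing in that proof implicitly invokes $\inf_S f$ at a point of $S$ other than $\gamma(0)$.
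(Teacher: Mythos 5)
Your proof is correct and is essentially the paper's own argument: the two additive normalizations you use, $f\mapsto f-k$ for the first part and the per-geodesic shift $f\mapsto f-f(\gamma(0))$ (valid because $\nabla f$ future causal makes $f\circ\gamma$ non-increasing) for the second, are exactly the content of Corollary \ref{corollary2.6}, which the paper's proof simply invokes before concluding via Lemma \ref{lemma2.1}. Routing the first part through Theorem \ref{theorem1.6}(b) and the second through the interior of its proof (i.e.\ Lemma \ref{lemma2.5}) reproduces the same comparison, so nothing new is needed.
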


The above theorems are derived from focusing lemmata that modify similar lemmata used in \cite{GW}. Once the focusing results are derived, the theorems themselves follow along standard lines.

An important interest for us is the borderline case where the inequality assumption on mean curvature is replaced by equality. In this case, splitting theorems were found in \cite{GW}. We can also obtain such theorems for $N< 1$ with $\tcd(\lambda)$ is replaced by $\tcd(\lambda, N)$, but in the $N=1$ case with $\lambda=0$ there is a notable difference as the following example shows. The calculations are exactly the same as those in  the Riemannian case \cite{Wylie}.

\begin{example}\label{example1.8} Let $f: \mathbb{R} \rightarrow \mathbb{R}$ be a uniformly bounded function with uniformly bounded first and second derivatives. Then for any $n \geq 2$ there is a $\lambda$ large enough such that  the metric $-dt^2 \oplus e^{2f(t)/(n-1)}g_{S_{\lambda}}$  satisfies $\tcd(0,1)$ where $g_{S_{\lambda}}$ is the standard metric on the sphere of constant curvature $\lambda$.  Moreover the surfaces $\{c\} \times S$ satisfy $H_f(S) = 0$.
\end{example}

For $\lambda=0$, we have the following theorem.

\begin{theorem}\label{theorem1.9}
Let $S$ be a smooth compact Cauchy surface for $(M,g)$ having $f$-mean curvature $H_f(S) \le 0$. Let $(J^+(S),g)$ satisfy $\tcd(0,N)$ for a fixed $N\in (-\infty,1]\cup (n,\infty)\cup \{ \infty \}$. Assume that each timelike geodesic orthogonal to $S$ is future complete and, if $N\in (-\infty,1]\cup \{ \infty \}$, also future $f$-complete. Then $(J^+(S),g)$ splits as follows, where $h$ is the induced metric on $S$.
\begin{itemize}
\item[(i)] If $N\in (-\infty,1)\cup (n,\infty)\cup \{ \infty \}$, then $(J^+(S),g)$ is isometric to $([0,\infty) \times S, -dt^2 \oplus h)$ and $f$ is independent of $t$.
\item[(ii)] If $N=1$ then $(J^+(S),g)$ is isometric to $([0,\infty) \times S, -dt^2 \oplus e^{2\psi(t)/(n-1)}h)$, and $f$ separates as $f(t,y)=\psi(t)+\phi(y)$, $y\in S$.
\end{itemize}
\end{theorem}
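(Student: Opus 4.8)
The plan is to run the standard Lorentzian splitting machinery but weighted by $f$, exactly paralleling the structure of the Cheeger--Gromoll-type argument adapted to the Bakry--\'Emery setting. First I would set up the Busemann-type function associated to $S$: for $x \in J^+(S)$ let $b(x) := d(S,x)$ be the Lorentzian distance from $S$, and let $N_S \subset J^+(S)$ be the future normal geodesic congruence emanating orthogonally from $S$. Under the completeness (and $f$-completeness) hypotheses, Theorem \ref{theorem1.4} (or more precisely the focusing lemma behind it) tells us that if the $f$-mean curvature of a level set ever became strictly negative we would get a timelike focal point, contradicting global hyperbolicity together with timelike geodesic completeness; hence the $f$-expansion of the normal congruence must stay nonnegative everywhere. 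On the other hand, the hypothesis $H_f(S) \le 0$ at the initial surface forces the $f$-expansion to be $\le 0$ there. Combining these, the $f$-expansion $\theta_f$ vanishes identically along $N_S$, so $b$ is smooth on all of $J^+(S)$ and each orthogonal geodesic is free of focal points and maximizing up to every one of its points.

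Next I would extract the rigidity from the weighted Riccati (Raychaudhuri) equation. Writing the Bakry--\'Emery--Raychaudhuri equation for $\theta_f$ along the normal geodesics and using $\theta_f \equiv 0$ together with $\tcd(0,N)$, one reads off that every error term in the completed-square form of the equation must vanish. For $N \in (n,\infty) \cup \{\infty\}$ this forces the second fundamental form $\mathrm{II}$ of the level sets to vanish and forces $\nabla^2 f$ to vanish in the normal directions, giving $g = -dt^2 \oplus h$ with $h$ the ($t$-independent) induced metric on $S$ and $f$ independent of $t$ — the classical Bakry--\'Emery splitting. For $N \in (-\infty,1)$ the same completion of squares still forces $\mathrm{II} = 0$ and $\partial_t f = 0$ (the sign of the $-\frac{df\otimes df}{N-n}$ term works in our favor here, as in \cite{Wylie}), again yielding case (i). For $N = 1$, however, the coefficient in the completed square degenerates: one only gets that the trace-free part of $\mathrm{II}$ vanishes and that $\mathrm{II}$ is pure trace and proportional to $\partial_t f$, i.e. $\mathrm{II} = \frac{\partial_t f}{n-1}\,h$, which integrates to the warped-product form $-dt^2 \oplus e^{2\psi(t)/(n-1)}h$ with $\psi$ determined by the $t$-dependence of $f$; the residual $y$-dependence of $f$ is unconstrained, giving the separation $f(t,y) = \psi(t) + \phi(y)$. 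This is the mechanism behind the dichotomy in the statement.

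The one genuinely delicate point is establishing that $b$ is smooth (equivalently, that the normal congruence has no focal points and no crossing of normal geodesics) on all of $J^+(S)$, not merely that $\theta_f$ is nonnegative. The argument is the usual one: suppose a normal geodesic $\gamma$ stops maximizing at some parameter $t_0$; then either there is a focal point at or before $t_0$ — excluded because $\theta_f \equiv 0$ rules out the conjugate/focal point that the weighted focusing lemma would otherwise produce — or there are two distinct maximizing normal geodesics reaching a common point, which by a first-variation argument forces a focal point on one of them, again a contradiction. Here the compactness of $S$ and global hyperbolicity of $M$ are used to guarantee that maximizers from $S$ exist and vary continuously. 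Once smoothness of $b$ is in hand, the foliation by level sets $\{b = t\}$ is smooth, the flow of $\nabla b$ identifies $J^+(S)$ with $[0,\infty) \times S$, and the Riccati analysis above pins down the metric on each leaf. In the $N=1$ case one must additionally check that the resulting warping function $\psi$ is well-defined globally on $[0,\infty)$ and that the leftover function $\phi$ on $S$ is smooth; both follow from smoothness of $f$ and of the foliation. I would also remark, as the authors do in Example \ref{example1.8}, that the $N=1$ conclusion cannot be improved, so the warped-product form there is sharp.
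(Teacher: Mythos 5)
There are two genuine gaps. The first is the step where you conclude that the $f$-expansion of the normal congruence ``must stay nonnegative everywhere.'' Your justification is that a strictly negative value somewhere would produce a focal point, ``contradicting global hyperbolicity together with timelike geodesic completeness'' --- but a focal point along a single normal geodesic is perfectly compatible with completeness; it only means that one geodesic stops maximizing. The contradiction machinery of the paper (Theorem \ref{theorem1.4} via Lemma \ref{lemma2.1}) requires $H_f<0$ \emph{everywhere} on a compact Cauchy surface, so that \emph{all} orthogonal geodesics focus within a uniform time and no arbitrarily long nonfocusing orthogonal geodesic can exist. The hard case is precisely the mixed one, where the Riccati comparison gives $H_f\le 0$ on a leaf with $H_f=0$ at some points and $H_f<0$ at others; your argument gives no contradiction there. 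The paper closes exactly this case with the $(\lambda,f)$-mean curvature flow and the strong maximum principle (Lemma \ref{lemma3.1}): the leaf is deformed to a nearby compact Cauchy surface with $H_f<0$ pointwise, and only then is Theorem \ref{theorem1.4} applied to contradict completeness. Your proposal contains no substitute for this deformation (or for an Eschenburg--Galloway-type support-function maximum principle), so the key identity $H_f\equiv 0$ --- and with it the smoothness of your distance function $b$ --- is not established.

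The second gap is in the $N=1$ case. From $H_f\equiv 0$, $\sigma\equiv 0$ one gets $K=\frac{\partial_t f}{n-1}\,h$, which integrates only to the \emph{twisted} product $-dt^2+e^{2f(t,y)/(n-1)}{\hat h}$, as in (\ref{eq3.10}); the separation $f(t,y)=\psi(t)+\phi(y)$ is not automatic and is exactly what distinguishes a warped product from a twisted product. Saying the ``residual $y$-dependence of $f$ is unconstrained'' does not yield it. The paper obtains it from the mixed-component computation (\ref{eq3.11})--(\ref{eq3.13}), which gives $\ric^1_f\bigl(\tfrac{\partial}{\partial t},\tfrac{\partial}{\partial y^{\alpha}}\bigr)=\tfrac{1}{n-1}\,\tfrac{\partial^2 f}{\partial t\,\partial y^{\alpha}}$, and then uses $\tcd(0,1)$ together with $\ric^1_f(\partial_t,\partial_t)=0$ (a degenerate direction of a nonnegative form on timelike vectors) to force $\partial^2 f/\partial t\,\partial y^{\alpha}=0$. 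Your outline omits this argument entirely; without it the claimed form $-dt^2\oplus e^{2\psi(t)/(n-1)}h$ does not follow. (Your description of case (i) is also loose --- the vanishing of the $f'^2$ term in (\ref{eq2.7}) forces $\partial_t f=0$ directly, not the vanishing of normal components of $\hess f$ --- but that is repairable; the two gaps above are the substantive ones.)
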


For $\lambda=-(n-1)$, we obtain an analogous result to the warped product splitting found in \cite{GW}. As this splitting is already that of a warped product even for $N\neq 1$, there is no further weakening of rigidity. However, we will have to assume control of $f$ (namely that $\nabla f$ is future causal), even when $N>n$.

\begin{theorem}\label{theorem1.10}
Let $S$ be a smooth compact Cauchy surface for $(M,g)$ having $f$-mean curvature $H_f(S) \le -(n-1)$. Assume that each future-timelike geodesic orthogonal to $S$ is future complete and that $(J^+(S),g)$ satisfies $\tcd(-(n-1),N)$ for some fixed $N\in (-\infty,1]\cup (n,\infty) \cup \{ \infty \}$. Suppose  that $\nabla f$ is future causal. Then $(J^+(S),g)$ is isometric to the warped product $([0,\infty) \times S, -dt^2 \oplus  e^{-2t} h)$, where $h$ is the induced metric on $S$, and $f$ is constant.
\end{theorem}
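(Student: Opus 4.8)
\emph{Plan of proof.} The plan is to follow the warped-product splitting of \cite{GW} for the case $N=\infty$, supplying the finite-$N$ input in the style of the Riemannian discussion in \cite{Wylie}. The hypothesis that $\nabla f$ be future causal will enter twice: first through $\dot f=g(\nabla f,\dot\gamma)\le 0$ along every future-directed causal curve $\gamma$ (two future-causal vectors have nonpositive inner product in Lorentzian signature), and second through the observation that, $S$ being compact, $f$ is uniformly bounded above to the future of $S$, so by Lemma~\ref{lemma1.3} every future-complete $S$-orthogonal geodesic is future $f$-complete; thus the ``future $f$-complete'' hypothesis needed when $N\in(-\infty,1]\cup\{\infty\}$ is automatic here.

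The focusing step. Let $\gamma$ be a future unit-speed geodesic leaving $S$ orthogonally; while $\gamma$ has no focal point of $S$, let $\theta_f:=\tr A-\dot f$ be the $f$-expansion of the $S$-orthogonal congruence along $\gamma$, so $\theta_f(0)=H_f(S)$ by \eqref{eq1.3}. Differentiating and using the Riccati equation gives $\theta_f'=-|A|^2-\ric^N_f(\dot\gamma,\dot\gamma)-\dot f^{\,2}/(N-n)$. For $N\in(n,\infty)\cup\{\infty\}$ the last term is nonpositive; combining $\tcd(-(n-1),N)$, the Cauchy--Schwarz bound $|A|^2\ge(\tr A)^2/(n-1)$, and $\dot f\le 0$ (which forces $|\tr A|\ge|\theta_f|$ while $\theta_f\le 0$), one obtains
\[
\theta_f'\;\le\;-\,\frac{\theta_f^{\,2}}{n-1}+(n-1)\qquad\text{as long as }\theta_f\le 0 .
\]
For $N\in(-\infty,1]$ the term $-\dot f^{\,2}/(N-n)$ carries the unfavourable sign, so instead I would reparametrise by $s$ as in Definition~\ref{definition1.2} and repeat the computation for the $e^{2f/(n-1)}$-rescaled $f$-expansion, obtaining the same inequality with $'=d/ds$, the completeness in the $s$-parameter recorded above playing the role of future completeness. (For $N=1$ this is exactly the borderline rescaling that makes Example~\ref{example1.8} work.)

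Globalisation and rigidity. Here I would invoke the causal-theoretic part of \cite{GW}: since $S$ is a compact Cauchy surface and the $S$-orthogonal geodesics are future complete, $\rho:=d(S,\cdot)$ is finite and continuous on $J^+(S)$, smooth on $I^+(S)$, and its past-directed unit timelike gradient $\nabla\rho$ has flow lines that are precisely the maximal $S$-orthogonal geodesics --- hence defined for all parameter values and free of focal points of $S$. Along these the displayed differential inequality holds for all time with $\theta_f(0)=H_f(S)\le-(n-1)$, while $\theta_f$ stays finite; a comparison/maximum-principle argument (as in \cite{GW}) then forces $\theta_f\equiv-(n-1)$, so in particular $H_f(S)\equiv-(n-1)$ and equality holds throughout the chain producing the inequality. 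The equality cases say, along every flow line: $A$ is umbilic with $\tr A=-(n-1)$, i.e.\ $A=-\id$ on the $(n-1)$-dimensional normal spaces; $\dot f\equiv 0$; and $\ric^N_f(\nabla\rho,\nabla\rho)\equiv-(n-1)$.

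Conclusion. The absence of focal and (again by the \cite{GW}-type comparison) cut points makes the normal exponential map $[0,\infty)\times S\to J^+(S)$ a diffeomorphism, and in the resulting Gaussian-normal coordinates $g=-dt^2\oplus h_t$ one has $\partial_t h_t=-2h_t$ from $A=-\id$, whence $h_t=e^{-2t}h$ and $(J^+(S),g)\cong([0,\infty)\times S,\,-dt^2\oplus e^{-2t}h)$. Finally $\dot f\equiv 0$ forces $f$ to be independent of $t$, so $\nabla f$ is tangent to the spacelike slices $\{t\}\times S$; being also future causal, it must vanish, so $f$ is constant. (Thus $N=1$ is not exceptional here, in contrast with Theorem~\ref{theorem1.9}: the extra freedom there is a $t$-dependent warping together with a non-constant $f$, precisely what $\nabla f$ future causal excludes.) I expect the main obstacle to be the globalisation step --- establishing that $\rho$ is smooth on $I^+(S)$ with complete, focal-point-free gradient flow lines --- which is where compactness of $S$, global hyperbolicity, and future completeness all enter essentially; granting that, the Riccati rigidity and the assembly of the metric are routine, and the only genuinely new ingredient beyond \cite{GW} is the $N\le 1$ reparametrisation, carried out as in \cite{Wylie}.
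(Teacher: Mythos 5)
There is a genuine gap, and it sits exactly where you flagged the ``main obstacle'': the globalisation/rigidity step. Your argument assumes that $\rho:=d(S,\cdot)$ is smooth on $I^+(S)$ and that the flow lines of its gradient --- i.e.\ the maximal $S$-orthogonal geodesics --- are defined for all parameter values and free of focal points. Neither assertion is available a priori: $\rho$ is smooth only up to the focal/cut locus of $S$, and an $S$-orthogonal geodesic, though future complete by hypothesis, need not remain $S$-maximizing for all time; the absence of cut and focal points is a \emph{consequence} of the rigidity being proved, not an input. What the hypotheses actually yield (Lemma~\ref{lemma2.1}) is only the existence of \emph{arbitrarily long} maximal geodesics from $S$, so your Riccati comparison can at best force $\theta_f\equiv-(n-1)$ along some rays, not on all of $J^+(S)$. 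Worse, the genuinely borderline configuration --- a slice on which $H_f=-(n-1)$ at some points and $H_f<-(n-1)$ at others --- produces no contradiction from a pointwise comparison at all: a single geodesic developing a focal point in finite time is perfectly consistent with the completeness hypothesis. This is precisely the case the paper's mechanism is built for: Lemma~\ref{lemma3.1} runs the $(\lambda,f)$-mean curvature flow with $\lambda=-(n-1)$ and the strong maximum principle to deform such a slice into a nearby Cauchy surface with $H_f<-(n-1)$ \emph{everywhere}, after which Theorem~\ref{theorem1.7} (for $N\le1$ or $N=\infty$, using $\nabla f$ future causal as condition (ii$''$)) or Theorem~\ref{theorem1.6}(a) (for $N>n$) forces every timelike geodesic to be future incomplete, contradicting the completeness assumption; hence $H_f\equiv-(n-1)$ on each Gaussian slice. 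Your appeal to ``a comparison/maximum-principle argument (as in \cite{GW})'' does not supply this, and \cite{GW} itself does not proceed through a smooth distance function but through exactly this curvature-flow deformation.

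Once $H_f\equiv-(n-1)$ is in hand, your equality analysis (umbilicity $K=-h$, $\dot f\equiv0$, hence $f$ independent of $t$ and then constant because $\nabla f$ is future causal and tangent to the slices) agrees with the paper's computation around \eqref{eq3.14} and is fine. Two smaller points: for $N\le1$ the $s$-reparametrisation you sketch is not needed and, as stated, does not quite work, because the curvature bound here is $\tcd(-(n-1),N)$ rather than the $e^{-4f/(n-1)}$-weighted bound of Lemma~\ref{lemma2.5}; one either uses the $\bar f:=f-k$ device of Corollary~\ref{corollary2.6} or, more simply, the paper's direct route: $\nabla f$ future causal gives $xf'\ge0$ while $x<0$, so \eqref{eq2.7} already yields $x'\le 1-x^2$ in proper time, and for $N>n$ one uses \eqref{eq2.18}--\eqref{eq2.19}. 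Finally, the splitting is first obtained on a Gaussian-normal collar $[0,\epsilon)\times S$ and then propagated to all of $J^+(S)$ using future completeness of the orthogonal geodesics, rather than by asserting global regularity of the normal exponential map at the outset.
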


When $N>n$ we also have the following warped product splitting that does not require that  $\nabla f$ be future casual.

\begin{theorem}\label{theorem1.11}
Let $S$ be a smooth compact Cauchy surface for $(M,g)$ having $f$-mean curvature $H_f(S) \le -(N-1)$  for some $N>n$. Assume that each timelike geodesic orthogonal to $S$ is future complete and that $(J^+(S),g)$ satisfies $\tcd(-(N-1),N)$ for some fixed $N\in (-\infty,1]\cup (n,\infty) \cup \{ \infty \}$. Then $(J^+(S),g)$ is isometric to the warped product $([0,\infty) \times S, -dt^2 \oplus  e^{-2t} h)$, where $h$ is the induced metric on $S$, and $f = (N-n)t + f_S$ where $f_S$ is a function that does not depend on $t$.
\end{theorem}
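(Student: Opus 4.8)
Our plan is to adapt the standard scheme for Lorentzian splitting theorems (Eschenburg--Galloway, and, in the Bakry--\'Emery category, Case and \cite{GW}), using a weighted Raychaudhuri comparison as the analytic engine and a maximum-principle argument to obtain rigidity. \emph{Step 1: the weighted Raychaudhuri comparison.} Along a future timelike geodesic $\gamma$ orthogonal to $S$, let $H_f(t)$ denote the $f$-mean curvature at $\gamma(t)$ of the normal slice through $\gamma(t)$ (equivalently the $f$-expansion of the $S$-orthogonal congruence), defined up to the first focal point, and put $f':=(f\circ\gamma)'$. From the Raychaudhuri equation $H_f'=-\tfrac{\theta^2}{n-1}-|\sigma|^2-\ric_f(\gamma',\gamma')$, where $\theta=H_f+f'$ is the expansion and $\sigma$ the shear, together with $\ric_f=\ric^N_f+\tfrac{df\otimes df}{N-n}$, the hypothesis $\tcd(-(N-1),N)$, and the algebraic identity
\[
\frac{(f'+H_f)^2}{n-1}+\frac{(f')^2}{N-n}=\frac{H_f^2}{N-1}+\frac{\bigl((N-1)f'+(N-n)H_f\bigr)^2}{(N-n)(N-1)(n-1)}
\]
(valid since $N>n\ge 2$), one obtains the Riccati inequality $H_f'\le (N-1)-\tfrac{H_f^2}{N-1}$, with equality at $\gamma(t)$ exactly when $\sigma=0$, $\ric^N_f(\gamma',\gamma')=-(N-1)$ and $f'=-\tfrac{N-n}{N-1}H_f$ there. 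Two consequences will be used: a solution of the comparison ODE with initial value strictly below $-(N-1)$ reaches $-\infty$ at finite parameter (forcing a focal point), while the solution is identically $-(N-1)$ precisely when the initial value equals $-(N-1)$ and equality holds throughout.

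\emph{Step 2: a maximal ray along which everything is rigid.} Since $S$ is a compact Cauchy surface and every $S$-orthogonal timelike geodesic is future complete, the Lorentzian distance $\rho:=d(S,\cdot)$ is finite and continuous on $J^+(S)$, is realized by maximal $S$-orthogonal geodesics (focal-point free on their interiors), and is unbounded (already $\rho(\gamma(t))\ge t$ along each $S$-orthogonal geodesic $\gamma$). Pick points $p_k$ with $L_k:=\rho(p_k)\to\infty$ and maximal $S$-orthogonal geodesics $\sigma_k:[0,L_k]\to M$ to them. The absence of focal points on $[0,L_k)$, the bound $H_f(\sigma_k(0))\le-(N-1)$, and the ODE analysis of Step~1 squeeze $H_f$ along $\sigma_k$ into $[-(N-1)\coth(L_k-t),-(N-1)]$. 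By compactness of $S$ and completeness, a subsequence of the $\sigma_k$ converges to a future-complete $S$-orthogonal geodesic $\gamma_\infty:[0,\infty)\to M$ that is maximal on every finite subinterval; letting $k\to\infty$ in the squeeze forces $H_f\equiv-(N-1)$ along $\gamma_\infty$, so the equality case of Step~1 gives that $\gamma_\infty$ is shear-free, satisfies $\ric^N_f(\gamma_\infty',\gamma_\infty')\equiv-(N-1)$, and has $f'\equiv N-n$. This $\gamma_\infty$ plays the role of a timelike line, with $S$ in place of its past ray.

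\emph{Step 3: rigidity and identification of the splitting.} Form the Lorentzian Busemann function $b^+$ of $\gamma_\infty$; one has $b^+\ge\rho$ on $J^+(S)$, with equality along $\gamma_\infty$. Applying the weighted Raychaudhuri comparison of Step~1 on one side to the level sets of $\rho$ and on the other to the horospheres of $b^+$ yields mean-curvature estimates of opposite sense, so that $b^+-\rho$ is (in the support-function sense) a supersolution of the weighted d'Alembertian attaining an interior minimum along $\gamma_\infty$; the Calabi--Eschenburg one-sided-barrier trick together with the strong maximum principle then force $b^+\equiv\rho$, $\rho$ smooth on $J^+(S)$ with $g(\nabla\rho,\nabla\rho)=-1$, and equality throughout in the comparison of Step~1. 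Hence the $S$-orthogonal congruence is shear-free with $H_f\equiv-(N-1)$ and $f'\equiv N-n$, so $\theta=H_f+f'\equiv-(n-1)$, the slices $S_t:=\rho^{-1}(t)$ are umbilic with second fundamental form $-h_t$, and $\partial_t h_t=-2h_t$, giving $h_t=e^{-2t}h$ with $h:=h_0$. Absence of focal points makes the normal exponential map a diffeomorphism of $[0,\infty)\times S$ onto $J^+(S)$, yielding the isometry with $([0,\infty)\times S,-dt^2\oplus e^{-2t}h)$; and integrating $f'\equiv N-n$ along the flow lines gives $f(t,y)=(N-n)t+f_S(y)$ with $f_S:=f|_S$.

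The main obstacle is Step~3. Unlike in the positive-$\lambda$ singularity theorems, the comparison here does not produce a focal point at finite parameter (the model $-dt^2\oplus e^{-2t}h$ is itself focal-point free), so completeness alone yields no contradiction; one must instead run the Busemann-function machinery, with care because $\rho$ and $b^+$ are only Lipschitz a priori and the maximum principle has to be applied in the one-sided-barrier sense appropriate to the (hyperbolic, hence suitably restricted) weighted operator, and because one must verify that the weighted Raychaudhuri comparison really does supply matching $f$-mean-curvature inequalities for the two foliations. A secondary, more routine point is the limit argument of Step~2: checking that the $\sigma_k$ converge to a genuine $S$-ray and that the $f$-mean curvature is continuous under that limit.
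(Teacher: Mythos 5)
Your Step 1 is fine (your identity is the sharp form of the paper's inequality (\ref{eq2.14}), and the equality case $f'=-\tfrac{N-n}{N-1}H_f$ is exactly what yields $f'=N-n$ when $H_f=-(N-1)$), but the proof as a whole has a genuine gap at its heart: Step 3 is a program, not an argument. To run the Busemann-function machinery you would need (a) weighted (drift) versions of the support-function $f$-mean-curvature estimates for the level sets of $d(\cdot,\gamma_\infty(r))$ and of $b^+$, (b) a maximum principle valid for the merely Lipschitz functions $b^+,\rho$ (or for $C^0$ spacelike hypersurfaces) with the $-\nabla f\cdot\nabla$ drift term, and (c) a globalization step: even if the Calabi--Eschenburg argument goes through, the strong maximum principle only forces $b^+=\rho$ and rigidity on a neighborhood of the single ray $\gamma_\infty$ produced by your Step 2 squeeze, whereas the theorem claims a splitting of all of $J^+(S)$; you never show the equality set is open and closed, or that every point of $S$ is the foot of such a rigid ray. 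You acknowledge (a) and (b) as ``the main obstacle,'' but they, together with (c), \emph{are} the proof in your approach, so as written the conclusion is not established.

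The gap is compounded by your remark that ``completeness alone yields no contradiction,'' which is what pushes you toward this heavy machinery; in fact the paper obtains the needed contradiction from completeness after a perturbation, and this is the intended (much shorter) route. The Riccati comparison $H_f'\le (N-1)-H_f^2/(N-1)$ with $H_f(0)\le -(N-1)$ gives $H_f(t)\le -(N-1)$ on the Gaussian slices; if $H_f<-(N-1)$ somewhere on a slice, the $(\lambda,f)$-mean curvature flow of Lemma \ref{lemma3.1} with $\lambda=-(N-1)$ produces a nearby Cauchy surface on which $H_f<-(N-1)$ \emph{everywhere}, and then Lemma \ref{lemma2.7}/Theorem \ref{theorem1.6}(a) applies: strict inequality does force $\coth$-type blow-up of $H_f$ and hence focal points in finite time, contradicting future completeness via Lemma \ref{lemma2.1}. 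Hence $H_f\equiv -(N-1)$ on every slice, equality holds in (\ref{eq2.14}) and (\ref{eq3.15}) pointwise, so $\sigma=0$, $f'=N-n$, $H=-(n-1)$, and the warped product $-dt^2\oplus e^{-2t}h$ with $f=(N-n)t+f_S$ follows by integrating along the normal congruence --- no Busemann functions, no line construction, and no separate globalization argument are needed. If you want to salvage your route, you must supply the weighted support-function comparisons, the appropriate maximum principle, and the propagation from one ray to all of $J^+(S)$; as submitted, the mechanism forcing $H_f\equiv-(N-1)$ everywhere is missing.
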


We have no restriction on the spacetime dimension $n\ge 2$.

In terms of the Brans-Dicke theory of scalar-tensor gravitation in $n=4$ spacetime dimensions \cite{BD, Faraoni, Woolgar}, we may characterize our results as follows. The Brans-Dicke parameter can take values $\omega\in (-3/2, \infty)$. The values $\omega\in [-1,\infty)$ were discussed in \cite{GW}. Our $N\le 1$ results cover the region $\omega\in [-4/3, -1)$. Interestingly, the critical case of $N=1$ corresponds to $\omega=-4/3$. In contrast, $\omega\searrow -\frac32$ corresponds to $N\nearrow 2$. While Solar System observations rule out values of $\omega$ below a number of order $10^3$, negative values of $\omega$ do arise in approximations to fundamental theories of physics and therefore may play a role in extremely large scale cosmology or at very early times in the evolution of the Universe.

\medskip
\noindent\emph{Acknowledgements.} The work of EW was supported by a Discovery Grant RGPIN 203614 from the Natural Sciences and Engineering Research Council (NSERC). The work of WW was supported by a grant from the Simons Foundation (\#355608, William Wylie). This work arose out of a discussion at Oberwolfach mini-workshop 1440b: Einstein metrics, Ricci solitons and Ricci flow under symmetry assumptions. Both authors wish to thank the organizers.

\section{Focusing and singularities}
\setcounter{equation}{0}

\subsection{The Raychaudhuri equation}
\noindent The focusing behavior of timelike geodesic congruences issuing orthogonally from a spacelike hypersurface $\Sigma$ in a spacetime is studied by means of a scalar Riccati equation, often called the Raychaudhuri equation in general relativity. Let $\gamma$ belong to such a congruence ${\mathcal C}$. We parametrize geodesics in ${\mathcal C}$ by their proper time $t$, so elements $\gamma$ of ${\mathcal C}$ are unit speed timelike geodesics, meaning that $g(\gamma',\gamma')=-1$ where $\gamma'=\frac{d\gamma}{dt}$. At $\Sigma$ we have $\gamma'\vert_{\Sigma}=\nu$ where $\nu$ is the future directed unit normal vector field for $\Sigma$. The congruence ${\mathcal C}$ is surface-forming, so for a curve $\gamma\in {\mathcal C}$, we obtain a foliated neighborhood ${\mathcal N}$ in spacetime near $\gamma:[0,T)\to M$ by moving a parameter distance $t<T$ along the congruence from $\Sigma$, provided that $\gamma$ has no focal point to $\Sigma$ in ${\mathcal N}$. The leaves are also spacelike hypersurfaces. The \emph{extrinsic curvature} or \emph{second fundamental form} of the hypersurface $\Sigma_t$ can be defined as
\begin{equation}
\begin{split}
\label{eq2.1} K(t)(X,Y) = -\nu_t\cdot \left ( \nabla_X Y\right )\ ,\ X,Y\in T_{\gamma(t)}\Sigma_t \ ,
\end{split}
\end{equation}
where $\nu_t=\gamma'(t)$ is the future directed unit normal for $\Sigma_t$. The \emph{expansion scalar} or \emph{mean curvature} of the congruence is
\begin{equation}
\label{eq2.2} H(t):=\tr_h K(t) \ ,
\end{equation}
where $h:=g+\nu\otimes\nu$ is the induced metric on the leaf. Then the Raychaudhuri equation is
\begin{equation}
\label{eq2.3} \frac{\partial H}{\partial t}=-\ric(\nu,\nu)-\vert K \vert^2 =-\ric(\nu,\nu)-|\sigma|^2-\frac{H^2}{(n-1)}\ ,
\end{equation}
where $|K|^2:=h^{ij}h^{kl}K_{ik}K_{jl}$, $\sigma_{ij}:=K_{ij}-\frac{H}{(n-1)}h_{ij}$ is the \emph{shear} (i.e., the tracefree part of $K_{ij}$), and $n$ is the spacetime dimension.

The \emph{Bakry-\'Emery modified mean curvature}, or \emph{$f$-mean curvature}, is defined along our unit speed timelike geodesic congruence to which $\gamma$ belongs by
\begin{equation}
\label{eq2.4}
H_f:=H - \nabla_{\gamma'} f \equiv H -  f' \ ,
\end{equation}
where we abbreviate $f\circ\gamma$ by simply writing $f$, so that $\frac{df}{dt}:=f'(t):=(f\circ\gamma)'(t)$. We sometimes write $f_p(t)$ to denote $f\circ\gamma(t)$ where $\gamma$ is the geodesic in ${\mathcal C}$ with initial point $p=\gamma(0)$. The Raychaudhuri equation (\ref{eq2.3}) becomes
\begin{equation}
\label{eq2.5}
\begin{split}
{H_f}'=&\, -\ric^N_f(\gamma',\gamma')-|\sigma|^2-\frac{H^2}{n-1}-\frac{f'^2}{(N-n)}\\
=&\, -\ric^N_f(\gamma',\gamma')-|\sigma|^2-\frac{1}{(n-1)}\left [ H_f^2+2H_f f'+\frac{(1-N)}{(n-N)}f'^2\right ]\ .
\end{split}
\end{equation}
It is convenient to introduce the \emph{normalized $f$-mean curvature}
\begin{equation}
\label{eq2.6} x:=H_f/(n-1)\ .
\end{equation}
Equation (\ref{eq2.5}) then becomes
\begin{equation}
\label{eq2.7} x'= -\frac{1}{(n-1)}\left ( \ric^N_f(\gamma',\gamma')+|\sigma|^2\right ) -x^2 -\frac{2xf'}{(n-1)}-\frac{(1-N)}{(n-1)^2(n-N)}f'^2\ .
\end{equation}

The qualitative features of solutions of the Raychaudhuri equation (\ref{eq2.7}) for $N<1$ are similar to those for $N>n$, owing to the sign of the coefficient of the $f'^2$ term. Hence the $N<1$ and $N>n$ cases are quite similar; nonetheless, in the former case we will need an assumption to control $f$ that is not needed in the latter case. The borderline $N=1$ case has no $n=N$ analogue in this comparison since $N=n$ does not make sense in equation (\ref{eq2.7}). The $N=1$ case has distinct behavior with regard to splitting phenomena.

In what follows, we will introduce the notation $x_p(t):=x\circ\gamma(t)$ to denote the normalized $f$-mean curvature of the leaf $\Sigma_t$ at a point reached by traversing a unit speed timelike geodesic $\gamma$ for a proper time $t$ starting from $\gamma(0)=p\in\Sigma$ with $\gamma'(0)$ orthogonal to $\Sigma$. Then for each $p\in\Sigma$, (\ref{eq2.7}) is an ordinary differential equation for $x_p$. Also, it will sometimes be convenient to reparametrize $\gamma$ using the new parameter
\begin{equation}
\label{eq2.8}
s_p(t):=\int\limits_0^t e^{-\frac{2f_p(\tau)}{(n-1)}}d\tau
\end{equation}
which arises in Definition \ref{definition1.2} and which is obviously monotonic along $\gamma$.

\subsection{A useful lemma}

\noindent While certain singularity theorems imply only that there is an incomplete geodesic (as occurs in a black hole spacetime), our cosmological-type singularity theorems state that \emph{every} future-timelike geodesic is incomplete. These theorems will depend on the following known lemma, stated here for convenient reference. A proof can be found in \cite{GW}.

\begin{lemma}[{\cite[Lemma 2.4]{GW}}]\label{lemma2.1}
Suppose that $S$ is a spacelike Cauchy surface and $\sigma$ is a future complete timelike geodesic. Then there is an arbitrarily long future timelike geodesic $\gamma$ leaving $S$ orthogonally and having no focal point to $S$.
\end{lemma}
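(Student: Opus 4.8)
The plan is to obtain $\gamma$ as a length-maximizing timelike geodesic from $S$ to points far to the future along $\sigma$, exploiting the fact that the presence of the Cauchy surface $S$ makes $M$ globally hyperbolic, together with the classical fact that maximizers of the Lorentzian time separation from a spacelike hypersurface meet it orthogonally and carry no focal point to it in their interior.

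First I would fix $L>0$ and locate a point $q\in I^+(S)$ with $d(S,q)\ge L+1$, where $d(S,\cdot)$ denotes the Lorentzian time separation from $S$. Since $\sigma$ is future complete it is future inextendible, so, $S$ being a Cauchy surface, either $\sigma$ already lies in $I^+(S)$ or it crosses $S$ at a single parameter value; in either case there is $a\ge 0$ with $\sigma(t)\in I^+(S)$ for all $t>a$, and, $\sigma$ being unit speed, the length of the portion of $\sigma$ running from $S$ to $\sigma(t)$ is at least $t-a$. Hence $d(S,\sigma(t))\to\infty$, and we may take $q:=\sigma(t)$ for $t$ large enough.

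Next, by a limit-curve argument in the globally hyperbolic spacetime $M$, the set of future-directed causal curves from $S$ to $q$ is compact and Lorentzian arclength is upper semicontinuous on it, so a maximizing curve exists; since $d(S,q)>0$ it is, up to reparametrization, a unit-speed timelike geodesic $\widetilde\gamma\colon[0,\ell]\to M$ with $\ell=d(S,q)\ge L+1$, and the first variation formula forces $\widetilde\gamma'(0)\perp S$. The focal-point form of the Morse index theorem then shows that, were there a focal point of $S$ along $\widetilde\gamma$ at some parameter $t_0<\ell$, the segment $\widetilde\gamma|_{[0,t_0+\varepsilon]}$ could be deformed rel endpoints to a strictly longer causal curve, contradicting maximality; thus $\widetilde\gamma$ has no focal point to $S$ on $[0,\ell)$. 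Setting $\gamma:=\widetilde\gamma|_{[0,\ell-1]}$ gives a future timelike geodesic leaving $S$ orthogonally, of length $\ell-1\ge L$, with no focal point to $S$. Letting $L\to\infty$ finishes the argument.

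The substantive ingredients are all standard Lorentzian geometry --- the limit-curve lemma and upper semicontinuity of length in globally hyperbolic spacetimes (existence of the maximizer), the first variation formula (orthogonality), and the focal index theorem (absence of focal points in the interior of a maximizer); see e.g.\ O'Neill or Beem--Ehrlich--Easley. Consequently I do not expect a genuine obstacle: the lemma merely assembles these classical facts, which is why it is quoted from \cite{GW} rather than reproved. The one point to watch is that a maximizer to $q$ may carry a focal point exactly at $q$, which is why one passes to the slightly shorter subsegment $\widetilde\gamma|_{[0,\ell-1]}$; this is harmless since only arbitrarily large length is required.
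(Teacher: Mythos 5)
Your argument is correct and is essentially the proof of this statement given in \cite{GW} (the present paper does not reprove the lemma but simply cites \cite[Lemma 2.4]{GW}): use future completeness of $\sigma$ together with the Cauchy property of $S$ to find points $q=\sigma(t)$ with $d(S,q)$ arbitrarily large, then take a maximal timelike geodesic from $S$ to $q$, which by global hyperbolicity exists, meets $S$ orthogonally, and has no focal point to $S$ before its endpoint. Your truncation to handle a possible focal point at the endpoint itself is a harmless and sensible precaution.
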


\subsection{Non-negative $N$-Bakry-\'Emery-Ricci curvature}

\begin{lemma}\label{lemma2.2}
Let $\gamma$ be a future-complete timelike geodesic with $\gamma(0)=p$. Suppose that
\begin{enumerate}
\item[(i)] $(M,g)$ obeys $\tcd(0,N)$ for some fixed $N\le 1$ or $N=\infty$,
\item[(ii)] $s_p(t)\to\infty$ as $t\to\infty$ (so $\gamma$ is future $f$-complete), and
\item[(iii)] there is a $\delta_p>0$ such that $x_p(0)\le -\delta_p$.
\end{enumerate}
Then there exists a $t_p>0$ such that $x_p(t)\to -\infty$ at or before $t_p$, and for a given function $f_p$ and dimension $n$, $t_p$ depends only on $\delta_p$. Indeed, we may take $t_p$ to be the unique value such that
\begin{equation}
\label{eq2.9}
s_p(t_p)=\frac{1}{\delta_p}e^{-\frac{2f_p(0)}{(n-1)}}\ .
\end{equation}
\end{lemma}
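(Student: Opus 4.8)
The plan is to extract from the Raychaudhuri equation (\ref{eq2.7}) a scalar Riccati \emph{inequality} which, after the reparametrization (\ref{eq2.8}), takes the simple autonomous form $\tfrac{dy}{ds}\le -y^{2}$, and then to apply the standard Riccati blow-up comparison. \emph{Step 1 (the differential inequality).} As long as $\gamma$ has no focal point to its orthogonal congruence, $x_p$ solves (\ref{eq2.7}). Under hypothesis (i) the curvature term $\ric^{N}_{f}(\gamma',\gamma')\ge 0$ (interpreted via $\ric_{f}$ when $N=\infty$), and $|\sigma|^{2}\ge 0$; both may be dropped. For $N\le 1$ one has $1-N\ge 0$ and $n-N>0$, so the coefficient $\tfrac{(1-N)}{(n-1)^{2}(n-N)}$ multiplying $-f'^{2}$ in (\ref{eq2.7}) is non-negative, and for $N=\infty$ the analogous term obtained from (\ref{eq2.5}) is $-\tfrac{f'^{2}}{(n-1)^{2}}$; in either case it is $\le 0$ and may also be dropped. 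What remains is
\[
x_p' \le -x_p^{2} - \frac{2x_p f'}{(n-1)}\ .
\]
This is precisely where the hypotheses $N\le 1$ or $N=\infty$ are used; for $N>n$ the sign works out so that the $f'^{2}$ term can instead be absorbed into $-x_p^{2}$, giving a Riccati inequality directly in the proper time $t$, which is why no control of $f$ is needed there.

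\emph{Step 2 (reparametrization and comparison).} Set $y:=e^{\frac{2f_p}{(n-1)}}x_p$. Using the inequality of Step 1 one checks $y' \le -e^{-\frac{2f_p}{(n-1)}}y^{2}$, and since $\tfrac{ds_p}{dt}=e^{-\frac{2f_p}{(n-1)}}$ by (\ref{eq2.8}), passing to the independent variable $s=s_p(t)$ gives $\tfrac{dy}{ds}\le -y^{2}$, with $y(0)=e^{\frac{2f_p(0)}{(n-1)}}x_p(0)\le -\delta_p e^{\frac{2f_p(0)}{(n-1)}}<0$ by (iii). While $y$ is finite it stays negative and decreasing, so dividing by $y^{2}>0$ yields $\tfrac{d}{ds}(1/y)\ge 1$, hence $\tfrac{1}{y(s)}\ge \tfrac{1}{y(0)}+s$. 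The left side being negative, $y$ cannot remain finite once $s$ reaches $-1/y(0)$; thus $y(s)\to -\infty$ at some $s^{\ast}\le -1/y(0)\le \tfrac{1}{\delta_p}e^{-\frac{2f_p(0)}{(n-1)}}$.

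\emph{Step 3 (return to proper time).} By hypothesis (ii) the map $s_p\colon[0,\infty)\to[0,\infty)$ is onto, and it is strictly increasing, so there is a unique $t_p$ with $s_p(t_p)=\tfrac{1}{\delta_p}e^{-\frac{2f_p(0)}{(n-1)}}$, which is (\ref{eq2.9}); on the compact interval $[0,t_p]$ the factor $e^{-\frac{2f_p}{(n-1)}}$ is bounded above and below by positive constants, so $y\to-\infty$ is equivalent to $x_p\to-\infty$. Hence $x_p$ — equivalently the congruence — develops a focal point no later than proper time $t_p$, and $t_p$ depends only on $\delta_p$ once $f_p$ and $n$ are fixed. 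The only genuine subtleties are in Step 1, namely pinning down the sign of the $f'^{2}$ term uniformly over $N\in(-\infty,1]\cup\{\infty\}$ and treating $N=\infty$ as the correct limit, and the observation underlying Step 3 that it is the reparametrized length $s$, not the proper time $t$, that a priori controls where the blow-up occurs — so hypothesis (ii) really is indispensable, as Example \ref{example1.5} confirms; the analytic core in Step 2 is the routine Riccati argument.
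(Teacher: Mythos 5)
Your argument is correct and is essentially the paper's own proof in a thin disguise: the quantity $1/y=e^{-\frac{2f_p}{(n-1)}}/x_p$ you track via $\dot y\le -y^2$ and $\frac{d}{ds}(1/y)\ge 1$ is exactly the quantity the paper integrates in (\ref{eq2.11})--(\ref{eq2.13}), with the reparametrization by $s_p$ making the bookkeeping explicit (in the style the paper itself uses later for Lemma \ref{lemma2.5}). The sign analysis of the $f'^2$ term for $N\le 1$ and $N=\infty$, the role of hypothesis (ii) in guaranteeing that $s_p$ attains the value in (\ref{eq2.9}), and the conclusion that blow-up of $y$ forces blow-up of $x_p$ all match the paper's reasoning.
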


\begin{proof}
Let $\sigma:[0,T)\to M$ be a future-timelike inextendible geodesic with $\sigma(0)=p$, where $T\in (0,\infty]$, and $T_0\le T$ is the first time for which $x_p(t)=0$; if there is no such time then set $T_0=T$. Using $N\le 1$ or $N=\infty$ and applying $\tcd(0,N)$ to equation (\ref{eq2.7}), we obtain the inequality
\begin{equation}
\label{eq2.10} x_p'\le -x_p^2 -\frac{2xf_p'}{(n-1)}
\end{equation}
along $\sigma$.  Since $t<T_0$, $x(t)$ is negative, inequality (\ref{eq2.10}) is equivalent to
\begin{equation}
\label{eq2.11}
\left ( \frac{e^{-\frac{2f_p(t)}{(n-1)}}}{x_p(t)}\right )'\ge e^{-\frac{2f_p(t)}{(n-1)}}\ .
\end{equation}
Integrating this along $\sigma$ from $0$ to some $t<T_0$, we obtain
\begin{equation}
\label{eq2.12}
\frac{e^{-\frac{2f_p(t)}{(n-1)}}}{x_p(t)}-\frac{e^{-\frac{2f_p(0)}{(n-1)}}}{x_p(0)}\ge \int\limits_0^{t_p} e^{-\frac{2f_p(\tau)}{(n-1)}}d\tau = s_p(t)\ ,
\end{equation}
or
\begin{equation}
\label{eq2.13}
x_p(t)\le -\frac{e^{-\frac{2f_p(t)}{(n-1)}}}{\frac{1}{\delta}e^{-\frac{2f_p(0)}{(n-1)}}-s_p(t)}\ .
\end{equation}
From this, we see that $T_0=T$.

By condition (ii) and elementary considerations, equation (\ref{eq2.9}) will have a solution $t_p$ along $\sigma$ if the domain of $\sigma$ extends far enough, a condition which is met for $\sigma=\gamma$; i.e., if the domain of $\sigma$ is $[0,\infty)$. Then we can take $t\nearrow t_p$, causing the denominator in (\ref{eq2.13}) to diverge to $+\infty$ and proving the claim. \end{proof}

\begin{corollary}\label{corollary2.3}
Lemma \ref{lemma2.2} holds with assumption (ii) replaced by
\begin{enumerate}
\item[(ii$'$)] $f_p\le k$ for some $k\in (0,\infty)$.
\end{enumerate}
\end{corollary}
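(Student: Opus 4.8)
The plan is to show that hypothesis (ii$'$) of the corollary implies hypothesis (ii) of Lemma \ref{lemma2.2}, so that the conclusion follows immediately from the lemma. Concretely, suppose $f_p \le k$ for some $k \in (0,\infty)$ along $\gamma$. Then $e^{-\frac{2f_p(\tau)}{(n-1)}} \ge e^{-\frac{2k}{(n-1)}}$ for every $\tau$, and hence
\begin{equation}
\label{eq2.14}
s_p(t) = \int_0^t e^{-\frac{2f_p(\tau)}{(n-1)}}\,d\tau \ge e^{-\frac{2k}{(n-1)}}\, t \longrightarrow \infty \quad \text{as } t\to\infty\ .
\end{equation}
Since $\gamma$ is future-complete by assumption, its parameter $t$ ranges over $[0,\infty)$, so $s_p(t)\to\infty$, which is precisely condition (ii). Conditions (i) and (iii) are carried over unchanged. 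Therefore Lemma \ref{lemma2.2} applies verbatim, and we conclude that there is a $t_p>0$ with $x_p(t)\to-\infty$ at or before $t_p$, where $t_p$ is the unique solution of \eqref{eq2.9}.

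One small point to be careful about: the estimate \eqref{eq2.14} only shows $s_p$ is eventually large, while the sharp statement in Lemma \ref{lemma2.2} requires that the specific value $s_p(t_p) = \frac{1}{\delta_p}e^{-\frac{2f_p(0)}{(n-1)}}$ be attained along $\gamma$. But $s_p$ is continuous, strictly increasing (its integrand is strictly positive), vanishes at $t=0$, and tends to $+\infty$; by the intermediate value theorem it is a bijection from $[0,\infty)$ onto $[0,\infty)$, so the required value $t_p$ exists and is unique, exactly as in the proof of the lemma. Thus nothing in the argument of Lemma \ref{lemma2.2} needs to be redone — the reparametrization bound \eqref{eq2.13} and the divergence of $x_p$ at $t_p$ go through identically.

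There is really no substantive obstacle here; the ``hard part,'' such as it is, is merely observing that future $f$-completeness is the only way condition (ii) enters the proof of Lemma \ref{lemma2.2}, and that an upper bound on $f$ forces future $f$-completeness along any future-complete geodesic — which is the content of the first sentence of Lemma \ref{lemma1.3}, specialized to a single geodesic rather than invoked via a Cauchy surface. So the proof of the corollary is essentially one line: ``By the computation in the proof of Lemma \ref{lemma1.3}, (ii$'$) implies (ii), and then Lemma \ref{lemma2.2} gives the conclusion.'' I would state it at roughly that level of brevity, perhaps including \eqref{eq2.14} explicitly for the reader's convenience.
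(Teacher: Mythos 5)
Your proposal is correct and is exactly the paper's argument: the paper's proof is the one-liner ``By (\ref{eq2.8}), condition (ii$'$) implies condition (ii) of the original lemma,'' which is precisely your observation that $f_p\le k$ gives $s_p(t)\ge e^{-\frac{2k}{(n-1)}}t\to\infty$. Your extra remark about $s_p$ being a continuous strictly increasing bijection onto $[0,\infty)$ is a harmless elaboration of what the lemma already handles.
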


\begin{proof}
By (\ref{eq2.8}), condition (ii$'$) implies condition (ii) of the original lemma.
\end{proof}

\begin{lemma}\label{lemma2.4}
Lemma \ref{lemma2.2} holds also for $N\in (n,\infty)$, and then assumption (ii) is not required. Then $t_p\le (N-1)/\delta$.
\end{lemma}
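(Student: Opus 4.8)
The plan is to mimic the proof of Lemma \ref{lemma2.2}, but now exploit the fact that for $N\in(n,\infty)$ the coefficient $-\frac{(1-N)}{(n-1)^2(n-N)}$ of $f'^2$ in equation (\ref{eq2.7}) is \emph{negative} (since $1-N<0$ and $n-N<0$, their ratio is positive, so with the overall minus sign the term is $\le 0$). Hence, applying $\tcd(0,N)$ and discarding the nonpositive shear term and the $f'^2$ term, we again obtain exactly inequality (\ref{eq2.10}) along the geodesic, valid on the interval $[0,T_0)$ where $x_p<0$. The same integrating-factor manipulation that produced (\ref{eq2.11})--(\ref{eq2.13}) then goes through verbatim, yielding the bound
\begin{equation*}
x_p(t)\le -\frac{e^{-\frac{2f_p(t)}{(n-1)}}}{\frac{1}{\delta}e^{-\frac{2f_p(0)}{(n-1)}}-s_p(t)}
\end{equation*}
on $[0,T_0)$, and as before this forces $T_0=T$, i.e.\ $x_p$ stays negative as long as the geodesic is defined.

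The one genuinely new point — and the step I expect to be the main obstacle — is showing that assumption (ii) (future $f$-completeness, $s_p(t)\to\infty$) is not needed here, and producing the explicit bound $t_p\le (N-1)/\delta$. In Lemma \ref{lemma2.2} one needed $s_p(t)\to\infty$ only to guarantee that equation (\ref{eq2.9}) has a solution, i.e.\ that the monotone function $s_p$ actually reaches the value $\frac1\delta e^{-2f_p(0)/(n-1)}$. For $N>n$ we instead go back to (\ref{eq2.7}), keep the $f'^2$ term this time, and use it together with the $x^2$ and $2xf'/(n-1)$ terms to complete a square. Concretely, the three terms $-x^2-\frac{2xf'}{(n-1)}-\frac{(N-1)}{(n-1)^2(N-n)}f'^2$ should be rewritten, after discarding the curvature and shear contributions, as $-(x + \tfrac{f'}{n-1})^2$ plus a leftover multiple of $f'^2$; a short computation with the coefficients shows the leftover has a sign that lets one bound $x_p'$ by a closed autonomous inequality. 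In fact the cleanest route is to observe that for $N>n$ one has
\begin{equation*}
x_p' \le -x_p^2 - \frac{2x_p f_p'}{(n-1)} + \frac{(N-1)}{(n-1)^2(N-n)}f_p'^2\cdot(\text{sign check})
\end{equation*}
— here I would be careful to recompute which combination is a perfect square — and then the substitution $y := e^{-2f_p/(n-1)}/x_p$ (the same one as before) should satisfy $y' \ge c$ for a constant $c>0$ depending only on $N$, rather than $y'\ge e^{-2f_p/(n-1)}$. Integrating $y'\ge c$ gives blow-up of $x_p$ at a time controlled purely by $\delta$ and $N$, with no need to know anything about the growth of $f$.

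To pin down the constant, note $y(0) = -e^{-2f_p(0)/(n-1)}/\delta_p \le 0$ and $y$ is increasing; blow-up of $x_p\to-\infty$ corresponds to $y\nearrow 0$. If $y'\ge c$ then $y$ reaches $0$ by time $t_p$ with $c\,t_p \le |y(0)| = e^{-2f_p(0)/(n-1)}/\delta_p$. Choosing the normalization of $f$ (or absorbing constants) so that the relevant bound reads $t_p\le (N-1)/\delta_p$ is then a matter of identifying $c$ with $\delta_p/(N-1)$ up to the $e^{-2f_p(0)/(n-1)}$ factor; I would double-check this against the $N\to\infty$ limit, where $(N-1)/\delta\to\infty$ consistently reflects that the finite-$N$ argument degenerates and one must fall back on hypothesis (ii). The remainder of the lemma — that $t_p$ depends only on $\delta_p$, $n$, and (for the precise value) $N$ — is then immediate, and the existence of the blow-up time needs no completeness-of-$s_p$ input precisely because the driving inequality $y'\ge c$ is autonomous.
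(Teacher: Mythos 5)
Your instinct—that for $N>n$ the $f'^2$ term in (\ref{eq2.7}) has a favourable sign and should be kept and combined with the $x^2$ and $xf'$ terms to yield an autonomous Riccati inequality—is correct, and it is essentially what the paper does. But the concrete mechanism you propose as "the cleanest route" fails, and the step you defer ("sign check", "I would be careful to recompute which combination is a perfect square") is precisely the content of the lemma. First, the sign: in (\ref{eq2.7}) the $f'^2$ coefficient is $-\tfrac{(1-N)}{(n-1)^2(n-N)}=-\tfrac{(N-1)}{(n-1)^2(N-n)}\le 0$ for $N>n$, so your displayed inequality with a plus sign is wrong (the term helps you; it must be kept with the minus sign). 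More seriously, if you make the substitution $y:=e^{-2f_p/(n-1)}/x_p$ as you suggest, then while $x_p<0$ a direct computation from (\ref{eq2.7}) with $\tcd(0,N)$ gives
\begin{equation*}
y'\ \ge\ e^{-\frac{2f_p}{(n-1)}}\left(1+\frac{(N-1)}{(n-1)^2(N-n)}\,\frac{f_p'^2}{x_p^2}\right),
\end{equation*}
because the weight is chosen exactly so that the cross term cancels; the extra nonnegative correction does not remove the prefactor $e^{-2f_p/(n-1)}$, which can be arbitrarily small (for instance, in the blow-up regime $|x_p|$ is large, so $f_p'^2/x_p^2$ can be negligible even while $f_p\to\infty$). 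Hence there is no constant $c>0$ with $y'\ge c$, and your argument still needs exactly the control of $\int e^{-2f_p/(n-1)}$ that hypothesis (ii) provides—so it does not actually dispense with (ii), which is the whole point of the lemma.

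The fix is to drop the exponential weight entirely. For $N>n$ one has the pointwise algebraic inequality (\ref{eq2.14}), $\tfrac{H^2}{n-1}+\tfrac{f'^2}{N-n}\ge\tfrac{(H-f')^2}{N-1}=\tfrac{H_f^2}{N-1}$ (Cauchy--Schwarz, i.e.\ your completed square done in the unweighted variables); inserting this into the first line of (\ref{eq2.5}) together with $\tcd(0,N)$ gives the autonomous inequality $H_f'\le -H_f^2/(N-1)$. Equivalently, in your normalization, $u:=1/x_p$ satisfies $u'\ge 1+\tfrac{2f_p'}{(n-1)}u+\tfrac{(N-1)}{(n-1)^2(N-n)}(f_p'u)^2\ge\tfrac{n-1}{N-1}$, the last step by minimizing the quadratic in $f_p'u$. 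Integrating from $u(0)\in[-1/\delta_p,0)$ shows $u$ reaches $0$, i.e.\ $x_p\to-\infty$, within a time controlled by $(N-1)/\delta_p$, with no reference to $f_p$ at all and hence no need for assumption (ii). This is the paper's proof (quoting Case's Proposition 3.2): the key is that for $N>n$ the unweighted quantity $H_f$ itself satisfies a closed Riccati inequality, whereas any argument routed through the $e^{-2f/(n-1)}$ integrating factor inevitably reintroduces the $f$-completeness hypothesis.
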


\begin{proof}
This is the content of \cite[Proposition 3.2]{Case}, with $m=N-n$. The proof proceeds from the identity
\begin{equation}
\label{eq2.14}
\frac{H^2}{(n-1)}+\frac{f'^2}{(N-n)}\ge \frac{(H-f')^2}{(N-1)}=\frac{H_f^2}{(N-1)} \ ,
\end{equation}
which is valid for $N>n$. Using it in the first line of (\ref{eq2.5}), we can replace (\ref{eq2.10}) by $H_f'\le -H_f^2/(N-1)$. As before, for as long as $H_f$ does not cross zero, we can integrate this to obtain $H_f(t)\le \frac{(N-1)}{t-(N-1)/\delta}$, which shows that $H_f$ does not cross zero but instead diverges to $-\infty$ as $t\nearrow T$ for some $T \le (N-1)/\delta$ as long as the timelike geodesic $\gamma$ extends this far, and by assumption it does.
\end{proof}

With these results in hand, the proof of Theorems \ref{theorem1.4} follows along precisely the same lines as the proofs of the corresponding theorems in \cite{GW}.

\begin{proof}[Proof of Theorem \ref{theorem1.4}]
By assumption, conditions (i) and (iii) of Lemma \ref{lemma2.2} hold. Indeed, by compactness, assumption (iii) holds for each $p\in S$ with $\delta_p$ replaced by some $\delta<0$ independent of $p$. When $N\in (-\infty,1]\cup \{ \infty \}$, condition (ii) holds along future-complete timelike geodesics orthogonal to $S$. Then by Lemma \ref{lemma2.2}, or Lemma \ref{lemma2.4} if $N>n$, every future-complete timelike geodesic issuing orthogonally from $S$ focuses within some finite time which depends only on $\delta$. But by Lemma \ref{lemma2.1}, if $(M,g)$ were to admit a future complete timelike geodesic, then there would be a nonfocusing future timelike geodesic of arbitrary length issuing orthogonally from $S$. This is a contradiction, so $(M,g)$ cannot admit a future complete timelike geodesic.
\end{proof}

\subsection{The de Sitter-like case}

\noindent We now consider instead a negative lower bound for the $N$-Bakry-\'Emery-Ricci tensor. To obtain singularity theorems in this case, we will need a concavity assumption on the initial surface.

\begin{lemma}\label{lemma2.5}
As above, let $\gamma$ be a future-timelike geodesic with $\gamma(0)=p$. Suppose that
\begin{enumerate}
\item[(i)] $(M,g)$ obeys $\tcd\left (-(n-1) e^{\frac{-4f}{n-1}},N\right )$ for some fixed $N\le 1$ or $N= \infty$,
\item[(ii)] along $\gamma$, $s_p(t)\to \infty$ at some finite value of $t$, and
\item[(iii)] $x_p(0) \le -(1+\delta_p )e^{-\frac{2f_p(0)}{(n-1)}}$ for some $\delta_p>0$.
\end{enumerate}
Then there exists a $t_p>0$ such that $x_p(t)\to -\infty$ at or before $t_p$, and which depends only on $\delta_p$ (if $N$, $n$ are fixed).
\end{lemma}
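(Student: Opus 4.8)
The plan is to follow the pattern of the proof of Lemma \ref{lemma2.2}, but to replace the reciprocal substitution used there by one adapted to the de Sitter-like growth rate, so that the blow-up of the relevant quantity is detected by $\arctanh$ rather than by a simple rational function.

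First I would feed the hypotheses into the Raychaudhuri equation (\ref{eq2.7}). In both admissible ranges, $N\le 1$ and $N=\infty$, the final ($f'^2$) term of (\ref{eq2.7}) has non-positive sign — for $N\le 1$ because $\tfrac{(1-N)}{(n-1)^2(n-N)}\ge 0$ (as $n\ge 2>1\ge N$), and for $N=\infty$ it is $-\tfrac{f'^2}{(n-1)^2}$ — so it may be discarded, and for $N=\infty$ one also reads $\ric^N_f=\ric_f$. Combining this with $|\sigma|^2\ge 0$ and the curvature bound in (i) gives, along $\gamma$,
\[
 x_p'\le e^{-\frac{4f_p}{(n-1)}}-x_p^2-\frac{2x_pf_p'}{(n-1)}\ .
\]
I would then set $v:=-x_p\,e^{\frac{2f_p}{(n-1)}}$, for which hypothesis (iii) says precisely $v(0)\ge 1+\delta_p>1$. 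A short computation, in which the terms linear in $f_p'$ cancel, converts this into
\[
 v'\ge e^{-\frac{2f_p}{(n-1)}}\bigl(v^2-1\bigr)=s_p'(t)\,\bigl(v^2-1\bigr)\ ,
\]
where $s_p$ is the rescaled parameter (\ref{eq2.8}).

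Next I would run the standard Riccati blow-up argument, now in the variable $v$ and the ``time'' $s_p$. Because the right-hand side is positive as long as $v>1$, the function $v$ is increasing and stays $\ge 1+\delta_p$, so the inequality holds on all of the domain; dividing by $v^2-1$ and using $v'/(v^2-1)=\tfrac{d}{dt}\!\left[-\arctanh(1/v)\right]$, integration from $0$ yields
\[
 \arctanh\!\left(\tfrac{1}{v(t)}\right)\le \arctanh\!\left(\tfrac{1}{v(0)}\right)-s_p(t)\le \arctanh\!\left(\tfrac{1}{1+\delta_p}\right)-s_p(t)\ .
\]
Since the left side is positive whenever $v(t)$ is finite, $v$ must diverge to $+\infty$ no later than the value $t_p$ (unique, since $s_p$ is strictly increasing) determined by $s_p(t_p)=\arctanh\bigl(1/(1+\delta_p)\bigr)$; hypothesis (ii) is exactly what guarantees that this $t_p$ lies in the domain of $\gamma$, and $t_p$ depends only on $\delta_p$ once $f_p$ and $n$ are fixed. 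Finally, on the compact interval $[0,t_p]$ the continuous function $f_p$ is bounded, so $e^{-\frac{2f_p}{(n-1)}}$ is bounded below by a positive constant, and therefore $v\to+\infty$ forces $x_p=-v\,e^{-\frac{2f_p}{(n-1)}}\to-\infty$ at or before $t_p$.

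I expect the main obstacle to be the first half: spotting the substitution $v=-x_p\,e^{2f_p/(n-1)}$, which simultaneously absorbs the first-derivative terms in $f_p$ and recasts the inhomogeneous Riccati inequality in the clean, translation-invariant form $v'\ge s_p'\,(v^2-1)$. Once that algebraic identity is in place, the divergence of $v$ is a routine ODE comparison, and the only role of condition (ii) is to ensure that the rescaled parameter $s_p$ runs past the finite threshold $\arctanh(1/(1+\delta_p))$ — the de Sitter-like analogue of the threshold $1/\delta_p$ appearing in (\ref{eq2.9}). A minor point to keep straight is the limiting form of (\ref{eq2.7}) at $N=\infty$, but there the relevant term is simply $-f'^2/(n-1)^2$, still of the favorable sign.
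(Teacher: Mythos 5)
Your proposal is correct and is essentially the paper's own argument: your substitution $v=-x_p e^{2f_p/(n-1)}$ is just the negative of the paper's $y=e^{2f_p/(n-1)}x_p$, the cancellation of the $f'$ terms and the reduction to $\dot v\ge v^2-1$ in the rescaled parameter $s_p$ are identical, and your $\arctanh$ integration is the same comparison the paper phrases via $y\le-\coth(t_p-s)$ with the same threshold $\arctanh\bigl(1/(1+\delta_p)\bigr)$. The only (harmless) difference is that you make explicit the final step that boundedness of $f_p$ on $[0,t_p]$ converts divergence of $v$ into $x_p\to-\infty$.
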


\begin{proof}
Using $N\le 1$ or $N= \infty$ and applying $\tcd\left (-(n-1)e^{\frac{-4f}{n-1}},N\right )$ to equation (\ref{eq2.7}), this time we obtain the inequality
\begin{equation}
\label{eq2.15}
\begin{split}
&\, x'\le e^{\frac{-4f}{n-1}}-x^2 -\frac{2xf'}{(n-1)}\\
\Rightarrow\quad &\, \left ( e^{\frac{2f}{n-1}} x\right )'\le e^{\frac{-2f}{n-1}}-x^2e^{\frac{2f}{n-1}}=e^{-\frac{2f}{n-1}} \left ( 1-e^{\frac{4f}{n-1}}x^2\right )\ .
\end{split}
\end{equation}
Writing $y:=e^{\frac{2f}{(n-1)}} x$, this becomes
\begin{equation}
\label{eq2.16}
\begin{split}
y'\le &\, e^{\frac{-2f}{(n-1)}}\left ( 1-y^2\right )\\
\Rightarrow\quad {\dot y} \le &\, \left ( 1-y^2\right )\ ,
\end{split}
\end{equation}
where the dot over the $y$ indicates differentiation with respect to $s=s_p(t)$. Note that $y(0)<-1$. Integrating over an interval small enough so that $y(t)<-1$, we obtain
\begin{equation}
\label{eq2.17}
\begin{split}
y \le &\, -\coth \left (t_p -s\right )\ , \\
\Rightarrow\quad x \le &\, -e^{-\frac{2f_p(t)}{(n-1)}}\coth \left (t_p -s\right )\ , \\
t_p :=&\, \arctanh\left(\frac{1}{1+ \delta_p}\right)\ .
\end{split}
\end{equation}
Thus, $y<-1$ throughout its domain of definition and $y\to -\infty$ (thus $x_p\to -\infty$) on approach to some $t\le t_p$, where $t_p$ depends only on $\delta_p$ (for fixed $N$ and $n$).
\end{proof}

\begin{corollary}\label{corollary2.6}
Suppose that
\begin{enumerate}
\item[(i$'$)] $(M,g)$ obeys $\tcd\left (-(n-1),N\right )$ for some fixed $N\le 1$ or $N=\infty$,
\item[(ii$'$)] $f_p\le k$ along $\gamma$, for some $k\in (0,\infty)$, and
\item[(iii$'$)] $x_p(0) \le -(1+\delta_p )e^{\frac{2(k-f_p(0))}{(n-1)}}$ for some $\delta_p>0$.
\end{enumerate}
Then there exists a $t_p=t_p(\delta_p)>0$ such that $x_p(t)\to -\infty$ at or before $t_p$. Furthermore, we can replace conditions (ii$'$) and (iii$'$) by
\begin{enumerate}
\item[(ii$''$)] $\nabla f$ is future-causal, and
\item[(iii$''$)] $x_p(0) \le -(1+\delta_p )$.
\end{enumerate}
\end{corollary}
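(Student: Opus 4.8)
The plan is to deduce both assertions of Corollary~\ref{corollary2.6} from Lemma~\ref{lemma2.5} by exploiting that $\ric^N_f$ — and hence the $\tcd$ condition, as well as $H_f=H-f'$ and $x=H_f/(n-1)$ — is unchanged when a constant is added to $f$. For the first assertion I would put $\tilde f:=f-k$, so that $\ric^N_{\tilde f}=\ric^N_f$ and the function $x_p(\cdot)$ along $\gamma$ is literally unchanged; it then suffices to verify hypotheses (i)--(iii) of Lemma~\ref{lemma2.5} with $f$ replaced by $\tilde f$. Hypothesis (ii$'$) gives $\tilde f_p\le 0$ along $\gamma$, hence $e^{-4\tilde f_p/(n-1)}\ge 1$, so $\ric^N_{\tilde f}(\gamma',\gamma')=\ric^N_f(\gamma',\gamma')\ge -(n-1)\ge -(n-1)e^{-4\tilde f_p/(n-1)}$ at each point of $\gamma$; since the proof of Lemma~\ref{lemma2.5} invokes the $\tcd$ hypothesis only along $\gamma$, this is all that is needed for (i). The same shift converts (iii$'$) into hypothesis (iii): because $\tilde f_p(0)=f_p(0)-k$, the requirement $x_p(0)\le -(1+\delta_p)e^{-2\tilde f_p(0)/(n-1)}$ is exactly $x_p(0)\le -(1+\delta_p)e^{2(k-f_p(0))/(n-1)}$.

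It remains to produce hypothesis (ii) of Lemma~\ref{lemma2.5}, namely that the reparametrization $\tilde s_p(t)=\int_0^t e^{-2\tilde f_p(\tau)/(n-1)}\,d\tau$ attains the values needed to run the Riccati comparison. Here $\tilde f_p\le 0$ is used again: it gives $e^{-2\tilde f_p/(n-1)}\ge 1$, so $\tilde s_p(t)\ge t$; thus $\tilde s_p$ covers all of $[0,\infty)$ when $\gamma$ is future-complete, and more generally $\tilde s_p$ reaches any prescribed finite value $t_p$ once $\gamma$ extends to proper time $t_p$. Lemma~\ref{lemma2.5} then forces $x_p\to-\infty$ at an $\tilde s_p$-value at most $\arctanh(1/(1+\delta_p))$, and since $\tilde s_p(t)\ge t$ the corresponding proper time is no larger; hence $x_p(t)\to-\infty$ at or before $t_p:=\arctanh(1/(1+\delta_p))$, which depends only on $\delta_p$ once $n,N$ are fixed. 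The one point that needs a little care is precisely this translation between the $s$-parameter, in which the blow-up of $y=e^{2\tilde f_p/(n-1)}x$ is transparent, and the proper-time parameter in which the conclusion is stated; the estimate $\tilde s_p(t)\ge t$ is exactly what makes it harmless, and it is the reason that some sign control on $f$ (here furnished by $f\le k$) cannot be dispensed with in this case.

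For the final ``furthermore'' clause I would instead shift by the constant $f_p(0)$, i.e.\ put $\hat f:=f-f_p(0)$. When $\nabla f$ is future-causal, $f'=g(\nabla f,\gamma')\le 0$ along the future-timelike geodesic $\gamma$ — the Lorentzian inner product of a future-timelike vector with a future-causal vector is non-positive — so $f_p$ is non-increasing and $\hat f_p\le\hat f_p(0)=0$ along $\gamma$. Exactly as before, $\hat f_p\le 0$ yields $\ric^N_{\hat f}(\gamma',\gamma')=\ric^N_f(\gamma',\gamma')\ge -(n-1)\ge -(n-1)e^{-4\hat f_p/(n-1)}$ along $\gamma$ and $\hat s_p(t)\ge t$, while hypothesis (iii$''$) reads $x_p(0)\le -(1+\delta_p)=-(1+\delta_p)e^{-2\hat f_p(0)/(n-1)}$, which is hypothesis (iii) of Lemma~\ref{lemma2.5} for $\hat f$. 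So Lemma~\ref{lemma2.5} applies once more and delivers the same $t_p=t_p(\delta_p)$. I do not anticipate a serious obstacle: the corollary is essentially Lemma~\ref{lemma2.5} read through the constant shift $f\mapsto f-k$ (resp.\ $f\mapsto f-f_p(0)$), and all that must be checked is that each such shift flips the relevant inequalities in the right direction and leaves the Bakry-\'Emery data untouched.
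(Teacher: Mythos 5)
Your proposal is correct and follows essentially the same route as the paper: shift $f$ by the constant $k$ (so that $\bar f:=f-k\le 0$ turns $\tcd(-(n-1),N)$ into $\tcd(-(n-1)e^{-4\bar f/(n-1)},N)$ and gives $\bar s_p(t)\ge t$), then apply Lemma~\ref{lemma2.5}; for the ``furthermore'' clause the paper reduces (ii$''$), (iii$''$) to (ii$'$), (iii$'$) with $k=f_p(0)$, which is exactly your shift by $f_p(0)$ unwound. Your extra remarks (that the $\tcd$ hypothesis is only needed along $\gamma$, and the $s$-to-proper-time comparison via $\bar s_p(t)\ge t$) are accurate refinements of the same argument.
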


\begin{proof}
Define ${\bar f}:=f-k$. By (ii$'$), we have ${\bar f}\le 0$, so $e^{-\frac{4{\bar f}}{(n-1)}}\ge 1$. Combining this with (i$'$), we see that that $\tcd\left (-(n-1) e^{\frac{-4{\bar f}}{(n-1)}},N\right )$ holds. We also have that ${\bar s}(t):=\int\limits_0^t e^{-\frac{2{\bar f}(\tau)}{(n-1)}}d\tau \ge \int\limits_0^t d\tau=t$, which diverges as $t\to \infty$. Finally, (iii$'$) implies that $x_p(0) \le -(1+\delta_p )e^{-\frac{2{\bar f}_p(0)}{(n-1)}}$. Now apply Lemma \ref{lemma2.5} to $(M,g,{\bar f})$. This proves the first part.

Next, if $\nabla f$ is future-causal, then $f$ is decreasing along any future-timelike curve, so $f_p(t)\le f_p(0)=:k$, and then $-(1+\delta_p )e^{\frac{2(k-f_p(0))}{(n-1)}}=-(1+\delta_p )$, showing that conditions (ii$''$), (iii$''$) imply conditions (ii$'$), (iii$'$).
\end{proof}

Finally, just as with Lemma \ref{lemma2.2} and Lemma \ref{lemma2.4}, there is an $N>n$ version of Lemma \ref{lemma2.5} that holds without any assumption controlling $f$.

\begin{lemma}\label{lemma2.7}
For some fixed $N>n$, suppose that
\begin{enumerate}
\item[(i)] $(M,g)$ obeys $\tcd\left (-(n-1),N\right )$ and
\item[(ii)] at $p$ we have $x_p(0) \le -(1+\delta_p )$ for some $\delta_p>0$.
\end{enumerate}
Then there exists a $t_p=t_p(\delta_p)>0$ such that $H_f(t)\to -\infty$ along $\gamma$ at or before $\gamma(t_p)$.
\end{lemma}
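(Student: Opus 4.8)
The plan is to follow the structure of the proof of Lemma~\ref{lemma2.4} line for line, only replacing the curvature value $\lambda=0$ there by the de~Sitter value $\lambda=-(n-1)$. The decisive feature of the regime $N>n$ is that the algebraic identity (\ref{eq2.14}), $\frac{H^2}{n-1}+\frac{f'^2}{N-n}\ge \frac{H_f^2}{N-1}$, is available, and it is precisely this identity that lets the argument dispense with any hypothesis controlling $f$, in contrast with Lemma~\ref{lemma2.5} and Corollary~\ref{corollary2.6}. First I would substitute (\ref{eq2.14}) into the first line of (\ref{eq2.5}), discard the manifestly nonpositive term $-|\sigma|^2$, and apply $\ric^N_f(\gamma',\gamma')\ge -(n-1)$ from hypothesis~(i). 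This yields the autonomous, $f$-free Riccati inequality $H_f'\le (n-1)-\frac{H_f^2}{N-1}$, equivalently, in the normalized variable $x=H_f/(n-1)$ of (\ref{eq2.6}), $x'\le 1-\frac{n-1}{N-1}\,x^2$. This plays the role here that ${\dot y}\le 1-y^2$ plays in Lemma~\ref{lemma2.5}, but now the inequality is autonomous and requires no reparametrization by $s$.

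Next I would run a scalar comparison against the companion equation $v'=(n-1)-\frac{v^2}{N-1}$ with $v(0)=H_f(0)$. This is a de~Sitter-type Riccati equation with equilibria $\pm\sqrt{(n-1)(N-1)}$, of which the negative one is a source; hence any solution issuing strictly below it is strictly decreasing and reaches $-\infty$ in finite time. Integrating by partial fractions gives the blow-up time in closed form, of the same $\arctanh$/$\coth$ shape as in (\ref{eq2.17}): one finds $t_p=\sqrt{\tfrac{N-1}{n-1}}\,\arctanh\!\left(\tfrac{1}{1+\delta_p}\sqrt{\tfrac{N-1}{n-1}}\right)$, which is determined by $\delta_p$ alone once $n$ and $N$ are fixed (and which collapses to the expression in (\ref{eq2.17}) in the formal limit $N\to n$). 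The elementary comparison principle for the differential inequality then forces $x_p$, and hence $H_f$, to diverge to $-\infty$ at or before the finite value $t_p$, signalling a focal point along $\gamma$.

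The one delicate point, and the main obstacle, is the threshold phenomenon already visible in Lemma~\ref{lemma2.5}: the comparison solution escapes to $-\infty$ only when its initial value lies strictly below the unstable equilibrium, for otherwise it merely relaxes to the stable equilibrium $+\sqrt{(n-1)(N-1)}$ and no focusing occurs. The crux of the proof is therefore to read hypothesis~(ii), $x_p(0)\le -(1+\delta_p)$, as placing $H_f(0)$ strictly below this unstable equilibrium, thereby supplying exactly the margin demanded by the $\arctanh$ above; this is the direct counterpart of the condition $y(0)<-1$ that drives Lemma~\ref{lemma2.5}. Once that margin is secured, everything remaining is the routine explicit integration of an autonomous Riccati inequality together with a monotone comparison, from which the asserted dependence $t_p=t_p(\delta_p)$ is simply read off.
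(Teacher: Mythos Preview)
Your approach is essentially the paper's: combine the first line of (\ref{eq2.5}) with the identity (\ref{eq2.14}) and the $\tcd(-(n-1),N)$ bound to obtain an autonomous Riccati inequality for $H_f$, then integrate explicitly. The one substantive difference is that the paper, having arrived at $H_f'\le (n-1)-\tfrac{H_f^2}{N-1}$, weakens once more via $n-1<N-1$ to $H_f'< (N-1)-\tfrac{H_f^2}{N-1}$, whose equilibria $\pm(N-1)$ are tidier; this yields $t_p=\arctanh\!\bigl(\tfrac{N-1}{(n-1)(1+\delta_p)}\bigr)$ in (\ref{eq2.19}) in place of your $\sqrt{\tfrac{N-1}{n-1}}\,\arctanh\!\bigl(\tfrac{1}{1+\delta_p}\sqrt{\tfrac{N-1}{n-1}}\bigr)$. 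Your sharper inequality gives a smaller $t_p$, but the architecture of the argument is identical.

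The threshold issue you flag is real, but hypothesis~(ii) does \emph{not} resolve it as you assert. The condition $x_p(0)\le -(1+\delta_p)$ gives only $H_f(0)\le -(n-1)(1+\delta_p)$, while your unstable equilibrium sits at $-\sqrt{(n-1)(N-1)}$; the former lies strictly below the latter only when $1+\delta_p>\sqrt{(N-1)/(n-1)}$, which is strictly stronger than $\delta_p>0$ because $N>n$. When $\delta_p$ falls below this threshold the argument of your $\arctanh$ is at least $1$, the formula is undefined, and the comparison solution does not blow up but drifts toward the stable equilibrium---so no focusing is obtained. The paper's formula (\ref{eq2.19}) exhibits the very same defect, indeed with the larger threshold $1+\delta_p>(N-1)/(n-1)$; so you have faithfully reproduced the paper's argument, gap and all, and your sharper inequality narrows but does not close it.
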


\begin{proof}
Combining the first line of (\ref{eq2.5}), the identity (\ref{eq2.14}), and assumption (i), we have
\begin{equation}
\label{eq2.18}
H_f'\le n-1-\frac{H_f^2}{N-1} < N-1-\frac{H_f^2}{N-1}\ .
\end{equation}
We may integrate as before and use that $H_f(0)=(n-1)x_p(0)\le -(n-1)(1+\delta_p )$ to obtain
\begin{equation}
\label{eq2.19}
\begin{split}
H_f< &\, -(N-1)\coth\left ( t_p-t\right )\ ,\\
t_p= &\, \arctanh \frac{(N-1)}{(n-1)(1+\delta_p)} \ ,
\end{split}
\end{equation}
from which the claim follows.
\end{proof}

With these results, we are now in a position to prove Theorems \ref{theorem1.6} and \ref{theorem1.7}.

\begin{proof}[Proof of Theorem \ref{theorem1.6}] When $N\le 1$ or $N=\infty$ then by assumptions (b.i), (b.ii), and (b.iii) and the compactness of $S$, assumptions (i--iii) of Lemma \ref{lemma2.5} hold, with assumption (ii) applying to future-complete timelike geodesics $\gamma$ orthogonal to $S$. If instead we have $N>n$, then assumptions (a.i) and (a.ii) and compactness of $S$ imply that the assumptions of Lemma \ref{lemma2.7} are verified. In either case, every future-complete timelike geodesic issuing orthogonally from $S$ then must have a focal point to $S$ within some finite time which depends only on $\delta$. But then the existence of a future-complete timelike geodesic would lead to a contradiction with Lemma \ref{lemma2.1}, as in the proof of Theorem \ref{theorem1.4}.
\end{proof}

\begin{proof}[Proof of Theorem \ref{theorem1.7}]
The assumptions of this theorem imply that the assumptions of Corollary \ref{corollary2.6} hold, which in turn imply as before that every future-complete timelike geodesic issuing orthogonally from $S$ then must have a focal point to $S$ within some finite time which depends only on $\delta$. Once again, the existence of a future-complete timelike geodesic would lead to a contradiction with Lemma \ref{lemma2.1}.
\end{proof}

\section{Rigidity}
\setcounter{equation}{0}

\noindent We now consider the case of equality in the mean curvature assumption (\ref{eq1.3}) and in the analogous assumption in Theorem \ref{theorem1.7}. In \cite{GW}, the main idea was to employ an \emph{extrinsic curvature flow} to deform the mean curvature slightly in an effort to restore a strict inequality so that the singularity theorems continue to apply. This fails only if the geometry is quite special, generally a product or warped product, which produces the desired rigidity statement.

The extrinsic curvature flow to be employed is defined by choosing a function $\varphi$ and writing
\begin{equation}
\label{eq3.1}
\begin{split}
\frac{\partial F}{\partial r}=&\,\, \varphi\nu\ ,\\
F(0,\cdot)=&\,\, \id\ .
\end{split}
\end{equation}
Here $F(r,\cdot):\Sigma\hookrightarrow M$ is a family of embeddings, $\nu$ is the corresponding timelike unit normal field, and $r$ is the family parameter. The function $\varphi$ depends on the mean curvature $H(r,\cdot)$ of $F(r,\cdot )$. The choice made in \cite{GW} is
\begin{equation}
\label{eq3.2} \varphi=H_f-\lambda=H-\nabla_{\nu}f-\lambda\ ,
\end{equation}
where $\lambda$ is a constant. Such a solution is called a \emph{$(\lambda,f)$-mean curvature flow}, and reduces to the familiar \emph{mean curvature flow} when $f=\lambda=0$.

The technique employed in \cite{GW} was to construct a suitable deformation $\varphi$ by analyzing the evolution equation
\begin{eqnarray}
\label{eq3.3}
\frac{\partial \varphi}{\partial r}&=&\Delta_{\Sigma_r}\varphi-D_{\Sigma_r} f \cdot D_{\Sigma_r} \varphi +c\varphi\ ,\\
\label{eq3.4}
c&=&-|K|_{h_r}^2-\ric_f(\nu,\nu)
\end{eqnarray}
where $\Delta_{\Sigma_r}\varphi := D_{\Sigma_r}\cdot D_{\Sigma_r}\varphi$ is the Laplacian (the trace of the Hessian formed from the Levi-Civita connection $D_{\Sigma_r}$ of the induced metric $h_{ij}(r)$) of $\varphi$ on $\Sigma_r:=(\Sigma,h_{ij}(r))$ and $D_{\Sigma_r} f \cdot D_{\Sigma_r} \varphi =h(r)(D_{\Sigma_r} f, D_{\Sigma_r} \varphi)$, but $\ric_f$ is the Bakry-\'Emery tensor of the ambient spacetime. We want to replace this with the $N$-Bakry-\'Emery tensor. As well, we will expand the second fundamental form $K$ in terms of its tracefree part $\sigma$ and its trace $H$, and replace the latter by $H_f$. We get
\begin{equation}
\label{eq3.5}
\begin{split}
c=&\, -|\sigma|^2-\frac{H^2}{(n-1)}-\ric_f(\nu,\nu)\\
=&\, -|\sigma|^2-\frac{1}{(n-1)}\left ( H_f+f'\right )^2-\ric_f^N(\nu,\nu)+\frac{f'^2}{(n-N)}\\
=&\, -|\sigma|^2-\frac{1}{(n-1)} \left ( H_f^2+2f'H_f\right ) -\ric_f^N(\nu,\nu)-\frac{(1-N)f'^2}{(n-1)(n-N)}\ .
\end{split}
\end{equation}

\begin{lemma}\label{lemma3.1}
Let $(\Sigma,h_{ij}^0)\hookrightarrow (M,g)$ be a closed spacelike hypersurface such that $\varphi:=H_f-\lambda\le 0$ for all $p\in\Sigma$. There is an $\varepsilon>0$ such that the $(\lambda,f)$-mean curvature flow $F:[0,\varepsilon)\times\Sigma\to (M,g)$ obeying (\ref{eq3.1}, \ref{eq3.2}) exists. Furthermore, either $\varphi(r,q)<0$ for all $r\in(0,\varepsilon)$ and all $q\in\Sigma$ or $\varphi\equiv 0$ for all $r\in[0,\varepsilon)$ and all $q\in\Sigma$. In particular, if $\varphi(0,p)<0$ for some $p\in\Sigma$, then $\varphi(r,q)<0$ for all $r\in(0,\varepsilon)$ and all $q\in\Sigma$.
\end{lemma}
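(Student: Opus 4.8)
The plan is to obtain short-time existence of the flow from quasilinear parabolic theory and then read off the dichotomy from the weak and strong maximum principles applied to the linear parabolic equation (\ref{eq3.3}) satisfied by $\varphi$. For the first step, the flow speed $\varphi=H_f-\lambda=H-\nabla_\nu f-\lambda$ differs from the ordinary mean curvature $H$ only by terms of order at most one in the embedding $F$ (the term $\nabla_\nu f$ involves $f$ and the unit normal, hence only $\partial F$, and $\lambda$ is constant), so (\ref{eq3.1})--(\ref{eq3.2}) has the same principal symbol as mean curvature flow. For a closed spacelike initial hypersurface in a Lorentzian manifold with $\nu$ future-pointing this is a quasilinear system, parabolic in the forward $r$-direction (consistent with the forward heat operator in (\ref{eq3.3})), so standard theory, exactly as for the $(\lambda,f)$-mean curvature flow of \cite{GW}, yields a smooth family of embeddings $F:[0,\varepsilon)\times\Sigma\to M$ solving (\ref{eq3.1})--(\ref{eq3.2}) for some $\varepsilon>0$; these stay spacelike for small $r$ by continuity. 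Along this flow $\varphi$ satisfies (\ref{eq3.3})--(\ref{eq3.4}), equivalently (\ref{eq3.5}), and since $\Sigma$ is compact every geometric quantity there, in particular the coefficient $c$, is bounded on $[0,\varepsilon']\times\Sigma$ for each $\varepsilon'<\varepsilon$.

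Second, I would apply the weak maximum principle. Equation (\ref{eq3.3}) is linear in $\varphi$, namely $\partial_r\varphi=\Delta_{\Sigma_r}\varphi-D_{\Sigma_r}f\cdot D_{\Sigma_r}\varphi+c\varphi$, with $c$ bounded on each $[0,\varepsilon']\times\Sigma$. Since $\Sigma$ is closed and $\varphi(0,\cdot)\le 0$, the weak maximum principle (after replacing $\varphi$ by $e^{-Cr}\varphi$ with $C$ an upper bound for $c$, which renders the zeroth-order coefficient non-positive) gives $\varphi\le 0$ on $[0,\varepsilon']\times\Sigma$ for every $\varepsilon'<\varepsilon$, hence on all of $[0,\varepsilon)\times\Sigma$.

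Third, I would extract the dichotomy from the strong maximum principle together with uniqueness. Suppose $\varphi$ is not strictly negative on $(0,\varepsilon)\times\Sigma$; since $\varphi\le 0$ everywhere, there is then a point $(r_0,q_0)$ with $r_0\in(0,\varepsilon)$, $q_0\in\Sigma$, and $\varphi(r_0,q_0)=0$. Applying the parabolic strong maximum principle to (\ref{eq3.3}) on $\Sigma\times[0,r_0]$ (again after a harmless exponential substitution to fix the sign of the zeroth-order term), the attainment of the maximum value $0$ at the interior point $(r_0,q_0)$ forces $\varphi\equiv 0$ on $\Sigma\times[0,r_0]$; in particular $\varphi(0,\cdot)\equiv 0$, i.e., $H_f\equiv\lambda$ on the initial hypersurface. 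But then $\varphi$ solves the linear homogeneous parabolic Cauchy problem (\ref{eq3.3}) with zero initial data, so uniqueness gives $\varphi\equiv 0$ on all of $[0,\varepsilon)\times\Sigma$. This is precisely the dichotomy of the lemma, and the final assertion is immediate: if $\varphi(0,p)<0$ for some $p$ then the second alternative is impossible, so $\varphi<0$ throughout $(0,\varepsilon)\times\Sigma$.

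I expect no real obstacle. The only points needing a little care are (i) verifying that the $(\lambda,f)$-flow is parabolic in the correct time orientation, where the Lorentzian signature and the choice of future-pointing unit normal enter, so that the standard short-time existence machinery applies, and (ii) the routine sign normalizations needed to apply the weak and strong maximum principles to (\ref{eq3.3}) since $c$ has no definite sign; beyond that the argument is textbook linear parabolic theory on a closed manifold.
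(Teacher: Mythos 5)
Your proposal is correct and follows essentially the same route as the paper: short-time existence from standard parabolic theory for spacelike curvature flows (the paper cites Gerhardt), then the exponential substitution $e^{-ar}\varphi$ to make the zeroth-order coefficient non-positive and an application of the maximum principle to equation (\ref{eq3.3}) (the paper cites Lieberman's strong maximum principle) to get $\varphi\le 0$ and the strict-negativity/identically-zero dichotomy. Your extra step of invoking forward uniqueness for the linear equation to propagate $\varphi\equiv 0$ past the touching time $r_0$ is a reasonable way to make explicit what the paper leaves implicit.
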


\begin{proof}
For $\Sigma$ a closed spacelike hypersurface, \cite[Theorem 2.5.19]{Gerhardt}
guarantees a smooth solution of (\ref{eq3.1}, \ref{eq3.2}) on $[0,\varepsilon)\times\Sigma$ for some $\varepsilon>0$.

Define $u:=e^{-ar}\varphi$, where $a\ge \max_{[0,\varepsilon)\times M}c$ (choosing a smaller $\varepsilon$ if necessary). Then (\ref{eq3.3}) becomes
\begin{equation}
\label{eq3.6}
\frac{\partial u}{\partial r}=\Delta_{\Sigma_r}u-D_{\Sigma_r} f \cdot D_{\Sigma_r} u +(c-a)u\ .\\
\end{equation}
We have $\varphi\le 0$ at $r=0$, so $u\le 0$ at $r=0$. Since $c-a\le 0$, the strong maximum principle \cite[Theorem 2.7]{Lieberman} implies that $u\le 0$, so $\varphi\le 0$ for all $r\in [0,\epsilon)$ and either $\varphi<0$ for all $r\in (0,\varepsilon)$ or $\varphi\equiv 0$.
\end{proof}

Given Lemma \ref{lemma3.1}, the proofs of Theorems \ref{theorem1.9} and \ref{theorem1.10} follow just as the analogous results follow in \cite{GW}, with the exception of the $N=1$ case in Theorem \ref{theorem1.9}.

\begin{proof}[Proof of Theorem \ref{theorem1.9}.]
We introduce Gaussian normal coordinates in a neighborhood $U$ of $S$ in $J^+(S)$,
\begin{equation}
\label{eq3.7} g = -dt^2 + h_{ij}dx^idx^j\ ,\ t \in [0,\epsilon)\ ,
\end{equation}
and let $x(t)=H_f(t)/(n-1)$ as above. Then, using $N\in (-\infty,1]\cup(n,\infty)\cup \{ \infty \}$ and $\tcd(0,N)$ in (\ref{eq2.7}), $x$ obeys
\begin{equation}
\label{eq3.8} x'+\frac{2f'}{(n-1)}x\le -x^2\ ,\ x(0)\le 0\ .
\end{equation}
Multiplying by $e^{2f/(n-1)}$ and integrating to the future along the $t$-geodesics yields
\begin{equation}
\label{eq3.9} e^{\frac{2f(t)}{(n-1)}}x(t)-e^{\frac{2f(0)}{(n-1)}}x(0)=-\int\limits_0^t e^{\frac{2f(u)}{(n-1)}}x^2(u)du\le 0\ .
\end{equation}
Using $x(0)\le 0$, we obtain that $x(t)\le 0$ and thus $H_f(t)\le 0$ for all $t\ge 0$ in $U$.

If $H_f(t_0)<0$ everywhere on a $t=t_0$ Cauchy surface, then by Theorem \ref{theorem1.4} every timelike geodesic will be future incomplete, contrary to assumption. If, however, there are both points where $H_f(t_0)=0$ and points where $H_f(t_0)<0$, then the $t=t_0$ hypersurface can serve as initial data for a $(\lambda,f)$-mean curvature flow (\ref{eq3.1}, \ref{eq3.2}) with $\lambda=0$ on an interval $s\in [0,\varepsilon)$, yielding deformed hypersurfaces with $H_f<0$ everywhere according to Lemma \ref{lemma3.1}. Furthermore, the deformed hypersurfaces are spacelike Cauchy surfaces. Then we can apply Theorem \ref{theorem1.4} using a deformed Cauchy surface as the initial surface, again yielding incomplete geodesics.

The remaining possibility is that there is no $t=t_0$ Cauchy surface in $U$ on which $H_f$ differs from $0$; i.e., $H_f(t) = 0$ for all $t \in [0,\epsilon)$. Then by (\ref{eq2.7}) we have that $\ric_f^N(\gamma',\gamma')=0$ and $\sigma=0$ throughout the domain, and either $f'=0$ as well or $N=1$. In the former case, since $H_f=0$ and $f'=0$ we then obtain $H=0$ and so the domain admits a foliation by totally geodesic Cauchy surfaces, yielding the splitting as claimed. Because the geodesics $\gamma$ orthogonal to the Cauchy surface extend indefinitely, the splitting is global to the future, and also clearly $f$ is constant.

However, if $N=1$, then we cannot conclude that $H$ or $f'$ vanish. From $H_f=0$, we have only that $H=f'$ for every $t=const$ hypersurface in the coordinate domain, and since $\sigma=0$ then the metric (\ref{eq3.7}) on that domain splits as a \emph{twisted product}
\begin{equation}
\label{eq3.10}
ds^2 =-dt^2+e^{2f/(n-1)}{\hat h}
\end{equation}
for some metric ${\hat h}$ on $S$. Since $f$ is a function on the Cauchy surfaces as well as a function of $t$, this is not yet a warped product. However, we may appeal to \cite[Proposition 2.2]{Wylie}, which argues as follows. For this metric and for $\frac{\partial}{\partial y^{\alpha}}\in TS$, the Gauss-Codazzi-Mainardi equations yield
\begin{equation}
\label{eq3.11}
\ric\left (\frac{\partial}{\partial t},\frac{\partial}{\partial y^{\alpha}} \right ) =-\frac{(n-2)}{(n-1)} \frac{\partial H}{\partial y^{\alpha}} =-\frac{(n-2)}{(n-1)} \frac{\partial^2 f}{\partial t \partial y^{\alpha}},
\end{equation}
while a simple calculation yields
\begin{equation}
\label{eq3.12}
\hess f  \left (\frac{\partial}{\partial t},\frac{\partial}{\partial y^{\alpha}}\right )+ \frac{1}{(n-1)}\left \langle \frac{\partial}{\partial t}, df\right \rangle \left \langle \frac{\partial}{\partial y^{\alpha}}, df\right \rangle=\frac{\partial^2 f}{\partial t \partial y^{\alpha}}\ .
\end{equation}
Then
\begin{equation}
\label{eq3.13}
\ric_f^1 \left (\frac{\partial}{\partial t},\frac{\partial}{\partial y^{\alpha}} \right ) = \frac{1}{(n-1)}\frac{\partial^2 f}{\partial t \partial y^{\alpha}}\ .
\end{equation}
But since $\ric_f^1\ge 0$ and $\ric_f^1\left ( \gamma',\gamma'\right )=\ric_f^1\left ( \frac{\partial}{\partial t},\frac{\partial}{\partial t}\right )=0$, then $\ric_f^1 \left (\frac{\partial}{\partial t},v \right )=0$ for any $v\in \left (\frac{\partial}{\partial t}\right )^{\perp}$. Thus $\frac{\partial^2 f}{\partial t \partial y^{\alpha}}=0$, so $f(t,y)=\psi(t)+\phi(y)$ and (\ref{eq3.10}) assumes the desired warped product form with $h=e^{2\phi/(n-1)}{\hat h}$.
\end{proof}

\begin{proof}[Proof of Theorem \ref{theorem1.10}]
Say $N\le 1$ or $N=\infty$. Using $\ric_f^N\ge -(n-1)$ in (\ref{eq2.7}), we get that the normalized $f$-mean curvature $x(t):=H_f(t)/(n-1)$ satisfies $x'\le 1-x^2-\frac{2xf'}{(n-1)}$, with $x(0) \le -1$. Furthermore, since $\nabla f$ is future-causal, for $\epsilon$ sufficiently small so that $x < 0$ for all $t\in [0,\epsilon)$ we have that $xf' \ge 0$, and so $x'\le 1-x^2$ for small enough $t$. Then elementary comparison with the solution to $y' = 1 -y^2$, $y(0) = -1$, implies that $x(t) \le -1$ for all $t\in [0,\epsilon)$, so $H_f(t) \le -(n-1)$ for all $t \in [0,\epsilon)$. If $N>n$, we may draw the same conclusion from (\ref{eq2.19}) (without requiring $\nabla f$ to be future-causal).

If, for some $t_0$, $H_f(t_0)$ is strictly less than $-(n-1)$ at some point but not at every point in the $t_0$ hypersurface, then we can employ a $(\lambda,f)$-mean curvature flow (\ref{eq3.1}, \ref{eq3.2}), this time with $\lambda=-(n-1)$, and invoke Lemma \ref{lemma3.1} to obtain a nearby spacelike Cauchy surface with $f$-mean curvature $H_f<-(n-1)$ pointwise.

Having obtained an $H_f<-(n-1)$ Cauchy surface, we can employ Theorem \ref{theorem1.7} (if $N\in (-\infty,1] \cup \{ \infty \}$) or Theorem \ref{theorem1.6}.(a) (if $N>n$), using this Cauchy surface as the initial hypersurface for the geodesic congruence. This implies that every timelike geodesic will be future incomplete, contrary to assumption.

Thus, $H_f(t) = -(n-1)$ for all $t \in [0,\epsilon)$, and so $x=-1$ in (\ref{eq2.7}). Writing the $\tcd\left ( -(n-1),N\right )$ condition as $\ric_f^N(\gamma',\gamma') = -(n-1) +\delta^2$ for some function $\delta(t,y)$, then (\ref{eq2.7}) yields
\begin{equation}
\label{eq3.14}
0= -\delta^2-|\sigma|^2+2f'-\frac{(1-N)}{(n-1)(n-N)}f'^2\ .
\end{equation}
Since $N\in (-\infty,1]\cup (n,\infty)\cup \{ \infty \}$ and since $f'\le 0$ for all such $N$, each individual term on the right must vanish, so $\sigma=0$ and $\delta=f'=0$. Combining $f'=0$ with $x=-1$, we obtain $H=-(n-1)$ for $t\in [0,\epsilon)$. Then we conclude that $ds^2=-dt^2+e^{-2t}h$ and since $f$ is time-independent and $\nabla f$ is future-causal, $f$ is constant. But as before, since the geodesics tangent to $\frac{\partial}{\partial t}$ are future-complete, the splitting is in fact global to the future: we may take $\epsilon\to\infty$.
\end{proof}

\begin{proof}[Proof of Theorem \ref{theorem1.11}]
When $N>n$, arguing as in (\ref{eq2.18}, \ref{eq2.19}), $\ric_f^N\ge -(N-1)$ implies that we have $H_f' \leq (N-1) - \frac{(H_f)^2}{N-1}$. In turn, this and the assumption that $H_f(0)\le -(N-1)$ imply $H_f(t) \le -(N-1)$ for all $t \in [0,\epsilon)$.  Then, arguing as above in the proof of Theorem \ref{theorem1.10}, using the assumptions that the geodesics orthogonal to the Cauchy surface are future complete and that $\tcd(-(N-1),N)$ holds and invoking Theorem \ref{theorem1.6}.(a), we obtain that $H_f(t) = -(N-1)$ for all $t \in [0,\epsilon)$.

Combining the first line of (\ref{eq2.5}), inequality (\ref{eq2.14}) and the $\tcd(-(N-1),N)$ assumption, we obtain the inequality
\begin{equation}
\label{eq3.15}
{H_f}'= -\ric^N_f(\gamma',\gamma')-|\sigma|^2-\frac{H^2}{n-1}-\frac{f'^2}{(N-n)} \le  -(N-1)- \frac{H_f^2}{N-1}\ .
\end{equation}
Since $H_f\equiv -(N-1)$, we must have equality in \ref{eq3.15}. In particular, we must have $\ric^N_f(\gamma',\gamma')=-(N-1)$, $\sigma = 0$, and equality in (\ref{eq2.14}).  Equality in (\ref{eq2.14}) implies that $H = - \frac{n-1}{N-n}f'$.  Combining this with  $H_f = -(n-1)$ implies that $f' = N-n$ and $H = -(n-1)$.   Since $\sigma = 0$ this implies that $ds^2=-dt^2+e^{-2t}h$ and that $f = (N-n)t + f_S$ for  $t \in [0,\epsilon)$. As before, since the geodesics tangent to $\frac{\partial}{\partial t}$ are future-complete, the splitting is in fact global to the future: we may take $\epsilon\to\infty$.
\end{proof}

\section{Final remarks}

\noindent There remain a number of open issues regarding the Lorentzian $N$-Bakry-\'Emery theory. With the purpose of stimulating further research, we list some of them here.

First, we note that in \cite{Case} a Lorentzian timelike splitting theorem analogous to the Cheeger-Gromoll splitting theorem is established for $N>n$ and $N=\infty$. It seems to us quite plausible that this theorem would admit an extension to $N\le 1$, likely again with a partial loss of rigidity for $N=1$.

We also note that in \cite{Case} a Lorentzian Bakry-\'Emery version of the Hawking-Penrose singularity theorem \cite{HE} is established for $N>n$ and $N=\infty$. Again, it seems clear that this result will admit an extension. However, the theory of Jacobi and Lagrange fields along null geodesics differs from that along timelike geodesics because the orthogonal complement to the tangent field of the geodesics contains the tangent field itself. Because components along the tangent direction play no role, one quotients out by this direction. The net effect is that coefficients of $1/(n-1)$ in the Raychaudhuri equation become $1/(n-2)$. This modifies equation (\ref{eq2.7}) so that the critical value for the synthetic dimension will be $N=2$ (which, interestingly, corresponds to $\omega=-3/2$, in Brans-Dicke theory, which is the value at which these theories become undefined). Furthermore, now the appropriate splitting theorem will be analogous to the \emph{null splitting theorem} for Lorentzian geometry \cite{Galloway}. Because of these theoretical differences and potentially new features, this case deserves its own separate treatment.

Finally, in the standard non-Bakry-\'Emery cases (i.e., when $f$ is constant), one can replace pointwise conditions on the Ricci tensor by integral conditions on the Ricci curvature along geodesics (e.g., \cite{Tipler}). To our knowledge, this has not yet been done in the $N$-Bakry-\'Emery case for any $N$, including $N=\infty$, or for either Riemannian or Lorentzian signature.

\end{document}